\documentclass[12pt, reqno]{amsart}

\usepackage{
amsmath,
amssymb,
amsthm,
mathrsfs,
amsfonts,
ytableau,
enumitem,
upgreek,
soul,
mathtools,
shuffle,
array,
caption
}

\usepackage{thmtools}
\usepackage[nocompress]{cite}
\usepackage[linktocpage]{hyperref}

\hypersetup{
    colorlinks=true,
    linkcolor = blue,
    citecolor=magenta,
    urlcolor=cyan,
}

\usepackage[capitalise, nameinlink]{cleveref}

\crefname{equation}{}{}
\crefname{figure}{{\sc Figure}}{{\sc Figure}}

\numberwithin{equation}{section}
\numberwithin{figure}{section}
\numberwithin{table}{section}

\newtheorem{theorem}{Theorem}[section]
\newtheorem{proposition}[theorem]{Proposition}
\newtheorem{lemma}[theorem]{Lemma}

\newtheorem*{claim*}{Claim}

\theoremstyle{definition}
\newtheorem{algorithm}[theorem]{Algorithm}
\newtheorem{example}[theorem]{Example}
\newtheorem{definition}[theorem]{Definition}
\newtheorem{remark}[theorem]{Remark}

\usepackage[euler]{textgreek}
\usepackage{tikz}

\ytableausetup{mathmode, boxsize=1.2em}

\def\Z{\mathbb Z}
\def\C{\mathbb C}

\newcommand{\nc}{\newcommand}

\nc{\SG}{\mathfrak{S}}
\nc{\comp}{\mathrm{comp}}
\nc{\Des}{\mathrm{Des}}
\nc{\DesL}{\mathrm{Des}_{L}}
\nc{\set}{\mathrm{set}}
\nc{\ch}{\mathrm{ch}}
\nc{\Sym}{\mathrm{Sym}}
\nc{\QSym}{\mathrm{QSym}}
\nc{\SSYT}{\mathrm{SSYT}}
\nc{\SYT}{\mathrm{SYT}}
\nc{\SVT}{\mathrm{SVT}}
\nc{\StdSVT}{\mathrm{StdSVT}}
\nc{\wt}{\mathrm{wt}}
\nc{\sh}{\mathrm{sh}}
\nc{\std}{\mathrm{std}}
\nc{\tyd}{\mathtt{yd}}
\nc{\row}{\mathsf{row}}
\nc{\col}{\mathsf{col}}
\nc{\sfread}{\mathsf{read}}
\nc{\Hmod}{H_n(0)\text{-mod}}

\nc{\calB}{\mathcal{B}}
\nc{\calE}{\mathcal{E}}
\nc{\calI}{\mathcal{I}}
\nc{\bfF}{\mathbf{F}}
\nc{\bfG}{\mathbf{G}}
\nc{\scrG}{\mathscr{G}}
\nc{\sfB}{\mathsf{B}}
\nc{\ra}{\rightarrow}
\nc{\bfx}{\mathbf{x}}
\nc{\sfx}{\mathsf{x}}
\nc{\bfc}{\mathbf{c}}

\definecolor{purple}{rgb}{0.44, 0.0, 1.0}

\definecolor{yhblue}{rgb}{0,0,0.6}

\newenvironment{red}{\relax\color{red}}{\hspace*{.5ex}\relax}
\newenvironment{blue}{\relax\color{blue}}{\hspace*{.5ex}\relax}
\newenvironment{green}{\relax\color{wsgreen}}{\hspace*{.5ex}\relax}
\newenvironment{magenta}{\relax\color{magenta}}{\hspace*{.5ex}\relax}

\nc{\ber}{\begin{red}}
\nc{\er}{\end{red}}
\nc{\beb}{\begin{blue}}
\nc{\eb}{\end{blue}}
\nc{\bema}{\begin{magenta}}
\nc{\ema}{\end{magenta}}
\nc{\begr}{\begin{green}}
\nc{\egr}{\end{green}}

\title[Weak Bruhat interval modules for stable Grothendieck polynomials]
{Weak Bruhat interval modules of the 0-Hecke algebras for stable Grothendieck polynomials}

\author[Y.-H. Kim]{Young-Hun Kim}
\address{Department of Mathematics, Seoul Women’s University, Seoul 01797, Republic of Korea}
\email{yhkim@swu.ac.kr, ykim.math@gmail.com}

\keywords{$0$-Hecke algebra, weak Bruhat interval module, stable Grothendieck polynomial, set-valued tableau}

\subjclass[2020]{05E10, 05E05, 20C08}

\thanks{
This work was supported by the National Research Foundation of Korea(NRF) grant
funded by the Korea government(MSIT) (No. RS-2026-25595299).
}

\begin{document}

\begin{abstract}
For a partition $\lambda$, let $G_\lambda^{(\beta)}$ be the stable $\beta$-Grothendieck polynomial attached to $\lambda$. Each homogeneous component of the $\beta = 1$ specialization $G_\lambda^{(1)}$ is Schur-positive and hence positive in the fundamental basis of quasisymmetric functions. For $m\ge|\lambda|$, let $G_{\lambda,m}^{(1)}$ be the homogeneous degree $m$ component of $G_\lambda^{(1)}$. In this paper, we first give a direct proof of an expansion of $G_{\lambda,m}^{(1)}$ in the fundamental basis in terms of standard set-valued tableaux. We then use these tableaux as a basis to define a module of the $0$-Hecke algebra and show that the quasisymmetric characteristic of the resulting module is $G_{\lambda,m}^{(1)}$. We further show that this module decomposes as a direct sum of weak Bruhat interval modules.
\end{abstract}

\maketitle

\section{Introduction}\label{Sec: introduction}

Grothendieck polynomials, indexed by permutations, were originally defined by Lascoux and Sch\"utzenberger~\cite{LS82} and serve as polynomial representatives for $K$-theoretic Schubert classes of type $A$ flag varieties.
Fomin and Kirillov~\cite{FK94} introduced a $\beta$-deformation of Grothendieck polynomials and studied the corresponding stable limits, now called stable $\beta$-Grothendieck polynomials.
These stable limits are symmetric formal power series indexed by permutations.
For a Grassmannian permutation, that is, a permutation with at most one descent, the corresponding stable limit is naturally indexed by a partition.
For a partition $\lambda$, let $G_\lambda^{(\beta)}$ denote the stable $\beta$-Grothendieck polynomial attached to $\lambda$.
In \cite{Buch02}, Buch provided a set-valued tableau formula for $G_\lambda^{(-1)}$.
Buch's formula extends to $G_\lambda^{(\beta)}$, with the exponent of $\beta$ recording the number of entries in excess of $|\lambda|$; see, for example, \cite{McNamara06,PS19,MPS21}.

The specializations of $G_\lambda^{(\beta)}$ at $\beta=0$ and $\beta=-1$ are classical objects in Schubert calculus.
Indeed, $G_\lambda^{(\beta)}$ is the Schur function $s_\lambda$ at $\beta=0$, and it serves as a representative for the $K$-theoretic Schubert class attached to $\lambda$ in a suitable Grassmannian at $\beta=-1$.
In this paper, we focus on the $\beta=1$ specialization $G_\lambda^{(1)}$.
In this case, for each $m\ge|\lambda|$, the homogeneous degree $m$ component of $G_\lambda^{(1)}$ is Schur-positive, and the set-valued tableau formula is unsigned.
For convenience, we write $G_\lambda:=G_\lambda^{(1)}$ and denote its homogeneous degree $m$ component by $G_{\lambda,m}$.
For more information, see \cref{subsec: groth prelim}.

The $0$-Hecke algebra $H_n(0)$ is obtained from the Iwahori--Hecke algebra $H_n(q)$ by specializing $q$ to $0$.
Norton~\cite{Norton79} classified all irreducible and projective indecomposable $H_n(0)$-modules up to isomorphism.
Duchamp, Krob, Leclerc, and Thibon~\cite{DKLT96} introduced a ring isomorphism
\[
\ch:\bigoplus_{n\ge 0} G_0(\Hmod) \rightarrow \QSym,
\quad
[\bfF_\alpha]\mapsto F_\alpha
\]
called the quasisymmetric characteristic.
For the notation used here, see \cref{subsec: qsym prelim,subsec: 0hecke prelim}.
In view of this correspondence, many $0$-Hecke modules with important quasisymmetric characteristics have been constructed by defining suitable actions on combinatorial models rather than simply taking direct sums of irreducible modules.
Such modules have been constructed for a variety of quasisymmetric functions, including dual immaculate functions \cite{BBSSZ15}, quasisymmetric Schur functions \cite{TW15}, extended Schur functions \cite{Searles20}, Young row-strict quasisymmetric Schur functions \cite{BS22}, Young quasisymmetric Schur functions \cite{CKNO22b, LO25}, and genomic Schur functions \cite{KY24}.

Weak Bruhat interval modules were introduced by Jung, Kim, Lee, and Oh~\cite{JKLO22} to provide a uniform framework for studying $0$-Hecke modules.
Indeed, the authors investigated structural properties of these modules and showed that every indecomposable direct summand of several previously constructed $0$-Hecke modules is isomorphic to a weak Bruhat interval module.
Since then, weak Bruhat interval modules have been actively studied.
In \cite{CKO24}, it was shown that every weak Bruhat interval module can be realized as a poset module.
In the same year, a family of weak Bruhat interval modules whose quasisymmetric characteristics are skew Schur functions was studied from the viewpoint of poset modules \cite{KLO24}.
Beyond type $A$, the definition of weak Bruhat interval modules was extended to all finite Coxeter types in \cite{BS25}, where projective covers and injective hulls were also studied for certain families of these modules.
More recently, a classification of weak Bruhat interval modules up to isomorphism was provided for arbitrary Coxeter types in \cite{LYY26}.
For further results related to weak Bruhat interval modules, see also \cite{DS26,Searles25,KS26}.

The purpose of this paper is to provide a nice representation-theoretic interpretation of each homogeneous component $G_{\lambda,m}$ of $G_\lambda$.
It is natural to first ask whether $G_{\lambda,m}$ is the quasisymmetric characteristic of an indecomposable $H_m(0)$-module.
However, it is not always possible to construct such a module; see \cref{rem: indecomposable module}.
We therefore construct an $H_m(0)$-module that has quasisymmetric characteristic $G_{\lambda,m}$ and decomposes as a direct sum of weak Bruhat interval modules.
Below, we describe our results in more detail.
Hereafter, let $\lambda$ be a partition and let $m$ be an integer with $m\ge|\lambda|$.

In \cref{sec: svt action}, we first define a reading word $w_T$ for set-valued tableaux and use it to define their standardization (\cref{alg:standardization}).
With these notions, we give a direct proof of the $F$-expansion
\[
G_{\lambda,m}
=\sum_{T\in\StdSVT_m(\lambda)}F_{\comp(\Des(T))}
\]
in terms of standard set-valued tableaux (\cref{thm: stable groth F expansion}).
Here, $\StdSVT_m(\lambda)$ is the set of standard set-valued tableaux of shape $\lambda$ with $m$ entries.
We next define an $H_m(0)$-action on the $\C$-span of $\StdSVT_m(\lambda)$ and denote the resulting module by $\scrG_{\lambda;m}$ (\cref{thm: svt action is 0hecke}).
Using the $F$-expansion above, we prove in \cref{thm: characteristic} that 
\[
\ch([\scrG_{\lambda;m}])=G_{\lambda,m}.
\]
We also study the decomposition of $\scrG_{\lambda;m}$.
To do this, we define an equivalence relation on $\StdSVT_m(\lambda)$ (\cref{def: column equivalence}).
Let $\calE_{\lambda,m}$ denote the set of equivalence classes.
In \cref{thm: component direct sum}, we prove that for each $E \in \calE_{\lambda,m}$, the $\C$-span of $E$ is closed under the $H_m(0)$-action.
Consequently,
\[
\scrG_{\lambda;m} = \bigoplus_{E\in\calE_{\lambda,m}}\scrG_E,
\]
where $\scrG_E$ is the $H_m(0)$-submodule of $\scrG_{\lambda;m}$ whose underlying space is the $\C$-span of $E$.

\cref{sec: WBIM decomposition} is devoted to proving that each $\scrG_E$ is isomorphic to a weak Bruhat interval module.
We first construct two distinguished tableaux $T_E$ and $T'_E$ in $E$ (\cref{alg: extremal tableaux}).
In \cref{thm: unique source sink}, we prove that $T_E$ and $T'_E$ are the unique source and sink tableaux in $E$, respectively.
For the definitions of source and sink tableaux, see \cref{def: source sink}.
Then we assign a permutation $\sfread(T)$ to each $T \in E$ and define a partial order $\preceq_E$ on $E$ in terms of the $H_m(0)$-action.
With these preparations, we prove that the map sending $T$ to $\sfread(T)$ is a poset isomorphism from $(E,\preceq_E)$ to $([\sfread(T_E),\sfread(T'_E)]_L,\preceq_L)$ (\cref{prop: class is weak interval}).
Moreover, \cref{thm: component interval module} shows that this poset isomorphism extends linearly to an $H_m(0)$-module isomorphism
\[
\scrG_E\cong\sfB(\sfread(T_E),\sfread(T'_E)).
\]
Consequently, we have an $H_m(0)$-module isomorphism
\[
\scrG_{\lambda;m}
=\bigoplus_{E\in\calE_{\lambda,m}}\scrG_E
\cong
\bigoplus_{E\in\calE_{\lambda,m}}
\sfB(\sfread(T_E),\sfread(T'_E)).
\]

\section{Preliminaries}\label{Sec: preliminaries}

For integers $a$ and $b$, let $[a,b]:=\{k\in \Z\mid a\le k\le b\}$ if $a\le b$, and let $[a,b]:=\emptyset$ otherwise. 
For $n \in \Z_{\ge0}$, we write $[n]$ for $[1,n]$.
Throughout the paper, all vector spaces are over $\C$.

\subsection{Compositions, partitions, and Young diagrams}
\label{subsec: comp and diag prelim}

In this section, let $n \in \Z_{\ge 0}$.
A \emph{composition} $\alpha$ of $n$, denoted by $\alpha\models n$, is a finite sequence of positive integers
\[
\alpha=(\alpha_1,\alpha_2,\ldots,\alpha_k)
\]
such that $\alpha_1+\alpha_2+\cdots+\alpha_k=n$. We write
$|\alpha|=n$ and $\ell(\alpha)=k$. The empty composition $\varnothing$ is the unique composition of $0$.

For $n \ge 1$ and $\alpha=(\alpha_1,\alpha_2,\ldots,\alpha_k)\models n$, define
\[
\set(\alpha):=
\{\alpha_1+\alpha_2+\cdots+\alpha_i\mid 1\le i\le k-1\}
\subseteq[n-1].
\]
Conversely, for a nonempty subset
$I=\{i_1<i_2<\cdots<i_\ell\}\subseteq [n-1]$, define
\[
\comp(I) := (i_1,i_2-i_1,\ldots,i_\ell-i_{\ell-1},n-i_\ell),
\]
and set $\comp(\emptyset):=(n)$.
The maps $\set$ and $\comp$ are mutually inverse bijections.
For $n=0$, we adopt the conventions $\set(\varnothing):=\emptyset$ and $\comp(\emptyset):=\varnothing$.

For $\lambda=(\lambda_1,\lambda_2,\ldots,\lambda_{\ell(\lambda)})\models n$, if
\[
\lambda_1\ge \lambda_2\ge\cdots\ge \lambda_{\ell(\lambda)},
\]
then we say that $\lambda$ is a \emph{partition} of $n$ and denote it by $\lambda\vdash n$.
For $\lambda=(\lambda_1,\lambda_2,\ldots,\lambda_{\ell(\lambda)})\vdash n$, we define the \emph{Young diagram} $\tyd(\lambda)$ of $\lambda$ by a left-justified array of $n$ boxes where the $i$th row from the top has $\lambda_i$ boxes for $1 \le i \le \ell(\lambda)$. 
We write $(i,j)$ for the box in the $i$th row and $j$th column. 
Thus $(i,j)\in\tyd(\lambda)$ means that $1\le i\le \ell(\lambda)$ and $1\le j\le \lambda_i$.

\subsection{Symmetric functions and quasisymmetric functions}
\label{subsec: qsym prelim}

A \emph{symmetric function} is a bounded-degree formal power series in the variables $x_1,x_2,x_3,\ldots$ with coefficients in $\Z$ that is invariant under every permutation of the variables. 
The ring $\Sym$ of symmetric functions is a graded $\Z$-algebra
\[
\Sym=\bigoplus_{n\ge 0}\Sym_n,
\]
where $\Sym_n$ is the $\Z$-module consisting of all symmetric functions that are homogeneous of degree $n$.

A \emph{semistandard Young tableau} of shape $\lambda$ is a filling of $\tyd(\lambda)$ with positive integers such that the entries weakly increase along rows and strictly increase along columns. 
Let $\SSYT(\lambda)$ denote the set of semistandard Young tableaux of shape $\lambda$.
For $T\in\SSYT(\lambda)$, let $\wt(T)=(a_1,a_2,\ldots)$, where $a_i$ is the number
of entries equal to $i$ in $T$, and let $\bfx^{\wt(T)}=x_1^{a_1}x_2^{a_2}\cdots$. 
The \emph{Schur function} indexed by $\lambda$ is
\[
s_\lambda=\sum_{T\in\SSYT(\lambda)}\bfx^{\wt(T)}.
\]
For every $n\ge 0$, the set $\{s_\lambda\mid \lambda\vdash n\}$ is a basis for $\Sym_n$. Thus the Schur functions form a basis for $\Sym$, called the \emph{Schur basis}.

\emph{Quasisymmetric functions} are bounded-degree formal power series in the variables $x_1,x_2,x_3,\ldots$ with coefficients in $\Z$ such that, for every composition $\alpha=(\alpha_1,\alpha_2,\ldots,\alpha_k)$, the coefficient of $x_{i_1}^{\alpha_1}x_{i_2}^{\alpha_2}\cdots x_{i_k}^{\alpha_k}$ is independent of the choice of $i_1<i_2<\cdots<i_k$. 
The ring $\QSym$ of quasisymmetric functions is a graded $\Z$-algebra
\[
\QSym=\bigoplus_{n\ge 0}\QSym_n,
\]
where $\QSym_n$ is the $\Z$-module consisting of all quasisymmetric functions that are homogeneous of degree $n$. 
We regard $\Sym$ as a subalgebra of $\QSym$.

For $\alpha\models n$, the \emph{fundamental quasisymmetric function} $F_\alpha$ is defined by
\[
F_\alpha
=\sum_{\substack{1\le i_1\le i_2\le \cdots\le i_n\\
i_j<i_{j+1}\text{ if }j\in \set(\alpha)}}
x_{i_1}x_{i_2}\cdots x_{i_n},
\]
with the convention $F_\varnothing=1$.
For every $n\ge 0$, the set $\{F_\alpha\mid \alpha\models n\}$ is a basis for $\QSym_n$.
We call an expansion in the basis of fundamental quasisymmetric functions an \emph{$F$-expansion}.

A \emph{standard Young tableau} of shape $\lambda\vdash n$ is a filling of $\tyd(\lambda)$ with the entries $1,2,\ldots,n$, each appearing once, such that the entries increase along rows and columns. Let $\SYT(\lambda)$ denote the set of standard Young tableaux of shape $\lambda$. For $T\in\SYT(\lambda)$, define
\[
\Des(T):=\{i\in[n-1]\mid i+1\text{ lies in a lower row than }i\}.
\]
It is well known that Schur functions are positive in the fundamental basis; more precisely,
\begin{align}\label{eq: F expansion of Schur}
s_\lambda=\sum_{T\in\SYT(\lambda)}F_{\comp(\Des(T))}.
\end{align}
For more information on symmetric functions and quasisymmetric functions, see \cite[Chapter 7]{Stanley99}.

\subsection{Grothendieck polynomials}
\label{subsec: groth prelim}

Grothendieck polynomials were originally defined by Lascoux and Sch\"utzenberger using divided difference operators \cite{LS82}.
Fomin and Kirillov~\cite{FK94} introduced a $\beta$-deformation of Grothendieck polynomials and studied the corresponding stable limits, now called stable $\beta$-Grothendieck polynomials.
At $\beta=-1$, Buch provided a set-valued tableau formula for stable Grothendieck polynomials \cite{Buch02}.
Buch's description extends to stable $\beta$-Grothendieck polynomials for general $\beta$.
We follow this combinatorial tableau description, which is used by various authors \cite{MPS21,MPPS20,PS19}.

A \emph{set-valued filling} of shape $\lambda$ assigns to each box of $\tyd(\lambda)$ a nonempty finite subset of $\Z_{>0}$. 
A set-valued filling $T$ is a \emph{semistandard set-valued tableau} if
\begin{enumerate}[label = -, leftmargin = 4ex]
\item
$\max \, T(i,j) \le \min \, T(i,j+1)$ for all $(i,j) \in \tyd(\lambda)$ such that $(i,j+1) \in \tyd(\lambda)$, and
\item
$\max \, T(i,j)< \min \, T(i+1,j)$ for all $(i,j) \in \tyd(\lambda)$ such that $(i+1,j) \in \tyd(\lambda)$.
\end{enumerate}
Let $\SVT(\lambda)$ be the set of semistandard set-valued tableaux of shape $\lambda$.
For $T \in \SVT(\lambda)$, we denote by $|T|$ the number of entries of $T$.
A semistandard set-valued tableau $T$ is called a \emph{standard set-valued tableau} if the entries of $T$ are exactly $\{1,2,\ldots, |T| \}$, each appearing exactly once.
Let $\StdSVT(\lambda)$ denote the set of standard set-valued tableaux of shape $\lambda$.
For $m \ge |\lambda|$, let $\StdSVT_m(\lambda):=\{T\in\StdSVT(\lambda)\mid |T|=m\}$.

For $S\in \SVT(\lambda)$, let $\wt(S)=(a_1,a_2,\ldots)$, where $a_i$ is the number of occurrences of $i$ among all entries of $S$, and define
\[
\bfx^{\wt(S)}:=x_1^{a_1}x_2^{a_2}\cdots .
\]
For a partition $\lambda$, the \emph{stable $\beta$-Grothendieck polynomial} is defined by
\[
G_\lambda^{(\beta)}
:=\sum_{S\in \SVT(\lambda)}
\beta^{|S|-|\lambda|} \bfx^{\wt(S)}.
\]
For $m\ge |\lambda|$, we denote by $G_{\lambda,m}^{(\beta)}$ the homogeneous degree $m$ component of $G_\lambda^{(\beta)}$.
In this paper, we set $\beta=1$ and write $G_\lambda:=G_\lambda^{(1)}$ and $G_{\lambda,m}:=G_{\lambda,m}^{(1)}$.

\begin{remark}
Let $m \ge |\lambda|$.
Taking the homogeneous degree $m$ component of the Schur expansion in
\cite[Corollary~3.11]{MPS21}, we have
\[
G_{\lambda,m}^{(\beta)} = \sum_{\mu \vdash m} \beta^{|\mu|-|\lambda|} M_{\lambda}^\mu s_\mu,
\]
where $M_{\lambda}^\mu$ denotes the number of Yamanouchi set-valued tableaux of shape $\lambda$ and weight $\mu$.
It follows that the $\beta=1$ specialization $G_{\lambda,m}$ is positive in the Schur basis.
By the expansion \cref{eq: F expansion of Schur}, $G_{\lambda,m}$ is also positive in the fundamental basis.
This is the reason we work with the $\beta=1$ specialization in this paper.
In contrast, 
\[
G_{\lambda,m}^{(-1)} = \sum_{\mu \vdash m} (-1)^{|\mu|-|\lambda|} M_{\lambda}^\mu s_\mu.
\]
If $m-|\lambda|$ is odd, this component is the negative of a nonzero $F$-positive function and therefore cannot be the quasisymmetric characteristic of an $H_m(0)$-module.
\end{remark}

\subsection[The 0-Hecke algebra and the quasisymmetric characteristic]
{The \texorpdfstring{$0$}{0}-Hecke algebra and the quasisymmetric characteristic}
\label{subsec: 0hecke prelim}

The symmetric group $\SG_n$ is generated by the simple transpositions $s_i=(i,i+1)$ for $1\le i\le n-1$. 
For $\sigma \in \SG_n$, the number of simple transpositions in a reduced expression for $\sigma$ is called the \emph{length} of $\sigma$ and is denoted by $\ell(\sigma)$.

The $0$-Hecke algebra $H_n(0)$ is the unital $\C$-algebra generated by $\pi_1,\pi_2,\ldots,\pi_{n-1}$ subject to the relations
\begin{alignat}{2}\label{eq: relations of pi}
\pi_i^2 &= \pi_i 
&\qquad& \text{for } 1\le i \le n-1, \nonumber \\ 
\pi_i\pi_{i+1}\pi_i &= \pi_{i+1}\pi_i\pi_{i+1}
&\qquad& \text{for } 1\le i \le n-2, \\ \nonumber
\pi_i\pi_j &= \pi_j\pi_i
&\qquad& \text{if } |i-j|>1.
\end{alignat}
For $\sigma \in \SG_n$, let $\pi_\sigma:=\pi_{i_1}\pi_{i_2}\cdots\pi_{i_k}$, where $s_{i_1}s_{i_2}\cdots s_{i_k}$ is a reduced expression for $\sigma$.
It is well known that $\pi_\sigma$ is independent of the choices of reduced expressions and that $\{\pi_\sigma \mid \sigma \in \SG_n\}$ is a $\C$-basis for $H_n(0)$.
By \cite{Norton79}, there are exactly $2^{n-1}$ irreducible $H_n(0)$-modules. They are naturally parametrized by compositions of $n$.
For $\alpha\models n$, let $\bfF_\alpha$ be the one-dimensional $H_n(0)$-module spanned by $v_\alpha$ with action
\[
\pi_i\cdot v_\alpha=
\begin{cases}
0, & i\in \set(\alpha),\\
v_\alpha, & i\notin \set(\alpha).
\end{cases}
\]

Let $G_0(\Hmod)$ be the Grothendieck group of the category of finite-dimensional $H_n(0)$-modules. In~\cite{DKLT96}, Duchamp, Krob, Leclerc, and Thibon introduced the \emph{quasisymmetric characteristic}
\[
\ch:\bigoplus_{n\ge 0}G_0(\Hmod) \rightarrow \QSym,
\quad
[\bfF_\alpha]\mapsto F_\alpha.
\]

\subsection{Weak Bruhat interval modules}
\label{subsec: weak bruhat prelim}

Given $\sigma \in \SG_n$ and $i \in [n-1]$, the integer $i$ is called a \emph{left descent} of $\sigma$ if $\ell(s_i\sigma)<\ell(\sigma)$.
Let $\DesL(\sigma)$ be the set of all left descents of $\sigma$.
The \emph{left weak Bruhat order} $\preceq_L$ on $\SG_n$ is the partial order whose covering relation $\preceq_L^c$ is defined by
\[
\sigma \preceq_L^c s_i\sigma
\quad\text{if and only if}\quad
i \notin \DesL(\sigma).
\]
Given $\sigma,\rho \in \SG_n$, the closed interval
\[
[\sigma,\rho]_L:=\{\gamma \in \SG_n\mid \sigma \preceq_L \gamma \preceq_L \rho\}
\]
is called the \emph{left weak Bruhat interval} from $\sigma$ to $\rho$.

\begin{definition}\label{def: weak bruhat interval module}(\cite[Definition 1]{JKLO22})
Let $\sigma,\rho \in \SG_n$.
The \emph{weak Bruhat interval module associated to $[\sigma,\rho]_L$}, denoted by $\sfB(\sigma,\rho)$, is the $H_n(0)$-module with underlying space $\C[\sigma,\rho]_L$ and with the $H_n(0)$-action defined by
\[
\pi_i\cdot \gamma:=
\begin{cases}
\gamma & \text{if $i \in \DesL(\gamma)$},\\
0 & \text{if $i \notin \DesL(\gamma)$ and $s_i\gamma \notin [\sigma,\rho]_L$},\\
s_i\gamma & \text{if $i \notin \DesL(\gamma)$ and $s_i\gamma \in [\sigma,\rho]_L$.}
\end{cases}
\]
\end{definition}

\section[0-Hecke modules arising from standard set-valued tableaux]
{$0$-Hecke modules arising from standard set-valued tableaux}
\label{sec: svt action}

In this section, we construct an $H_m(0)$-module whose quasisymmetric characteristic is $G_{\lambda,m}$.
We first give an $F$-expansion of $G_{\lambda,m}$ in terms of standard set-valued tableaux.
We then define an $H_m(0)$-action on $\C\StdSVT_m(\lambda)$ and, using the $F$-expansion, show that the resulting module has $G_{\lambda,m}$ as its quasisymmetric characteristic.
Finally, we provide a direct sum decomposition of this module into submodules, each of which will be shown to be a weak Bruhat interval module in \cref{sec: WBIM decomposition}.

From now on, let $n$ be a positive integer and let $\lambda = (\lambda_1, \lambda_2, \ldots, \lambda_l)$ be a partition of $n$, unless otherwise stated.

\subsection[An F-expansion via standard set-valued tableaux]{An $F$-expansion of $G_{\lambda,m}$ via standard set-valued tableaux}

Following \cite[Definition~3.1]{BM12} and \cite[Definition~4.1]{PPPS22}, where French notation is used, for $T \in \SVT(\lambda)$, define the reading word $w_T$ as follows.

\begin{enumerate}[label = {\rm (R\arabic*)}]
\item Read the rows from bottom to top.
\item Within each row, first read the non-minimal entries. Read the boxes from
right to left and the entries within each box in decreasing order.
\item Then read the minimal entries from left to right.
\end{enumerate}

We define the standardization of set-valued tableaux by the following algorithm.

\begin{algorithm}\label{alg:standardization}
Let $T \in \SVT(\lambda)$ and let
\[
c_1<c_2<\cdots<c_r
\]
be the distinct integers appearing in $T$.
\begin{enumerate}[label = {\it Step \arabic*.}]
\item Let $S$ be a copy of $T$.
Set $a=1$ and $p = 1$.
\item Let $x^{(p)}_1, x^{(p)}_2,\ldots, x^{(p)}_{k_p}$ be the occurrences of $c_p$ in the original filling $T$, listed from left to right in the word $w_T$.
In $S$, replace the occurrence corresponding to $x^{(p)}_i$ by $a+i-1$ for $1\le i \le k_p$.
\item If $p<r$, set $a = a + k_p$ and $p = p+1$, and go to {\it Step 2}.
Otherwise go to {\it Step 4}.
\item Define $\std(T) := S$ and terminate the algorithm.
\end{enumerate}
\end{algorithm}

\begin{example}
Let
\[\ytableausetup{boxsize=2em}
T=
\begin{array}{l}
\begin{ytableau}
\scriptstyle 1,2 & \scriptstyle 2 & \scriptstyle 2,3,4 & \scriptstyle 4,5,6\\
\scriptstyle 5,6
\end{ytableau}
\end{array}
\in \SVT((4,1)).
\]
Then
\[
w_T=6\,5\,6\,5\,4\,3\,2\,1\,2\,2\,4.
\]
The integer $1$ is replaced by $1$, the three occurrences of $2$ are replaced
by $2,3,4$, the integer $3$ is replaced by $5$, the two occurrences of $4$ are
replaced by $6,7$, the two occurrences of $5$ are replaced by $8,9$, and the
two occurrences of $6$ are replaced by $10,11$.
Hence
\[
\std(T)=
\begin{array}{l}
\begin{ytableau}
\scriptstyle 1,2 & \scriptstyle 3 & \scriptstyle 4,5,6 & \scriptstyle 7,9,11\\
\scriptstyle 8,10
\end{ytableau}
\end{array}.
\]
\end{example}

\begin{lemma}\label{lem:standardization-is-standard}
Let $T\in\SVT(\lambda)$ and $m = |T|$. 
Then $\std(T)\in\StdSVT_m(\lambda)$.
\end{lemma}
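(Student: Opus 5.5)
We must show that $\std(T)$ has the entries $1,\dots,m$, each exactly once, and that it satisfies the row and column conditions of a semistandard set-valued tableau. The first part is immediate from \cref{alg:standardization}. The values $c_p$ are processed in increasing order, and the $k_p$ occurrences of $c_p$ are replaced by the consecutive block $a,a+1,\ldots,a+k_p-1$, where $a=1+k_1+\cdots+k_{p-1}$. These blocks partition $[m]$ since $\sum_p k_p=m=|T|$, and each box keeps its size. Moreover, within a box of $T$ the entries are distinct, so after replacement each box is still a set of distinct integers.

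The key observation is that standardization is order-preserving with a tie-breaking rule. If $x,y$ are occurrences in $T$ with values $v(x)<v(y)$, then their replacements satisfy $x'<y'$. If $v(x)=v(y)$, then $x'<y'$ exactly when $x$ precedes $y$ in $w_T$. So each inequality in the tableau conditions reduces either to a strict inequality already present in $T$, or to a comparison of equal values, where we must check the reading order.

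\textbf{Column condition.} For boxes $(i,j)$ and $(i+1,j)$ we have $\max T(i,j)<\min T(i+1,j)$ strictly. Hence every entry of $T(i,j)$ is strictly smaller than every entry of $T(i+1,j)$, and the order-preserving property gives $\max \std(T)(i,j)<\min \std(T)(i+1,j)$.

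\textbf{Row condition.} This is the main obstacle. For boxes $(i,j)$ and $(i,j+1)$ we have $\max T(i,j)\le\min T(i,j+1)$. The only problematic case is equality: some value $c$ is both the maximum of $T(i,j)$, at occurrence $x$, and the minimum of $T(i,j+1)$, at occurrence $y$. All other pairs of entries from the two boxes are strictly ordered in $T$, so they are fine.

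We must show that $x$ precedes $y$ in $w_T$. By (R3), the minimal entry $y$ of box $(i,j+1)$ is read in the final left-to-right pass over the minimal entries of row $i$. For $x$ there are two subcases.

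\begin{itemize}
\item If $x$ is the unique entry of $T(i,j)$, then $x$ is the minimal entry of box $(i,j)$. It is read in the same pass, and before $y$, because box $(i,j)$ lies to the left of box $(i,j+1)$.
\item Otherwise $x=\max T(i,j)$ is a non-minimal entry of $T(i,j)$. By (R2) it is read during the non-minimal phase of row $i$, which comes before all minimal entries of row $i$, hence before $y$.
\end{itemize}

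In both subcases $x'<y'$, so $\max \std(T)(i,j)<\min\std(T)(i,j+1)$, in fact strictly. Combined with the column condition and the first paragraph, this shows that $\std(T)\in\StdSVT_m(\lambda)$.
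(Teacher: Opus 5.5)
Your proof is correct and follows essentially the same route as the paper. In both, the labels $1,\dots,m$ are assigned once each, and strict inequalities in $T$ survive because smaller values are labeled first. The only delicate case, $\max T(i,j)=\min T(i,j+1)$ in horizontally adjacent boxes, is settled by the same two subcases: a non-minimal entry is read before all minimal entries of its row, and two minimal entries are read left to right.
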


\begin{proof}
The algorithm assigns the labels $1,2,\ldots,m$ exactly once.
It remains to show that the resulting filling is a set-valued tableau.

Let $a$ and $b$ be two entries of $T$ with $a < b$.
Then all occurrences of $a$ are replaced before all occurrences of $b$ in the algorithm.
Thus, every label assigned to an occurrence of $a$ is smaller than every label assigned to an occurrence of $b$.

Let $x_1$ and $x_2$ be two occurrences of the same integer in $T$.
Then $x_1$ and $x_2$ cannot lie in the same box or in the same column.
Suppose that $x_1$ lies in the box immediately to the left of the box containing $x_2$.
Let $B_i$ be the box containing $x_i$ for $i=1,2$.
The row condition for $T$ gives $x_1=\max B_1=\min B_2=x_2$.
If $x_1$ is not minimal in $B_1$, then $x_1$ is read among the non-minimal entries, before the minimal entry $x_2$.
If $x_1$ is minimal in $B_1$, then both $x_1$ and $x_2$ are minimal entries, and $x_1$ is read before $x_2$ since minimal entries are read from left to right.
Since \cref{alg:standardization} labels equal integers in reading order, $x_1$ receives a smaller label than $x_2$.

Combining the two cases, we see that the row and column inequalities are preserved.
Thus $\std(T)$ is a standard set-valued tableau.
\end{proof}

For $m \ge |\lambda|$, $T \in \StdSVT_m(\lambda)$, and $i \in [m]$, let $\row_T(i)$ and $\col_T(i)$ denote the row and column indices of the box containing $i$, respectively.
Define
\[
\Des(T):=\{i \in [m-1] \mid \col_T(i) \ge \col_T(i+1)\}.
\]

\begin{lemma}\label{lem:descent box criterion}
For $m\ge |\lambda|$, let $T\in\StdSVT_m(\lambda)$ and $i\in[m-1]$.
Then $i\in\Des(T)$ if and only if $i+1$ appears before $i$ in $w_T$.
\end{lemma}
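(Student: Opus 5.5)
Since $T$ is standard, $i$ and $i+1$ each occur exactly once in $T$. The plan is a case analysis on the relative position of the boxes $B$ and $B'$ containing $i$ and $i+1$. We compare the definition $\col_T(i)\ge\col_T(i+1)$ with the order in which (R1)--(R3) read these two entries.

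The key tool is the following structural fact about a standard set-valued tableau. If $x<y$ lie in boxes $(r,c)$ and $(r',c')$, then $r'<r$ and $c'<c$ cannot hold simultaneously. Indeed, the box $(r',c')$ is weakly northwest of $(r,c)$, so $\max T(r',c')<\min T(r,c)$, which forces $y<x$, a contradiction. Consequently, for consecutive entries $i,i+1$, either $B'$ lies weakly northeast of $B$, or $B'$ lies strictly southwest of $B$, or $B=B'$. We also use that within a box the entries form an interval-free set but are totally ordered, with $\min$ and $\max$ controlled by the row and column conditions.

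The cases are as follows.

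\begin{enumerate}[label=(\arabic*)]
\item \textbf{$B=B'$.} Then $\col_T(i)=\col_T(i+1)$, so $i\in\Des(T)$. Also $i+1$ is non-minimal. Either $i$ is also non-minimal, in which case (R2) reads the box in decreasing order and $i+1$ comes first, or $i$ is the minimum, which is read later in (R3). Either way $i+1$ precedes $i$.
\item \textbf{$B'$ in a strictly lower row.} By (R1), $i+1$ is read first. We need $\col_T(i)\ge\col_T(i+1)$, and this follows from the fact above: if $B'$ were in a strictly larger column, then $B$ would be strictly northwest of $B'$ in both coordinates. That case is allowed for $i<i+1$, so it must be excluded separately. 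Here $\max B< i+1\le\min$ of every box in column $\col(B')$ in rows $\ge \row(B)$. In particular the box in row $\row(B')$ and column $\col(B)$, which exists because $\lambda$ is a partition, has entries strictly between $\max B$ and $\min B'$ by the column and row conditions. This contradicts consecutiveness. The same squeeze excludes $B'$ strictly northeast, so (3) below only needs rows equal or $B'$ higher with a smaller column.
\item \textbf{$B'$ in a strictly higher row.} By the fact above, $B'$ must be weakly east, and the squeeze excludes strictly east. So $B'$ is directly above, with the same column. But then the column condition forces $i+1<i$, which is impossible. Hence this case does not occur, and also $B'$ strictly higher in a strictly smaller column cannot occur. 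This subcase needs care.
\item \textbf{Same row, $B\ne B'$.} Then $B'$ is to the right, since the row is increasing across boxes, so $i\notin\Des(T)$. We must show $i$ precedes $i+1$. If $i+1$ is non-minimal in $B'$, then $B'$ has a smaller entry $<i$ lying right of $i$, which is impossible. So $i+1=\min B'$ and is read in (R3). If $i$ is non-minimal, it is read in (R2), before (R3). If $i$ is minimal, left-to-right reading puts $i$ first.
\end{enumerate}

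The main obstacle is the bookkeeping in the cross-row cases, namely the "squeeze" argument showing that $i$ and $i+1$ in different rows must be positioned so that the column comparison matches the row comparison. This is where the partition shape guarantees that the intermediate box exists.
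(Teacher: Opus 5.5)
\textbf{Gap in case (3).} Your plan works: split by the relative position of the boxes $B\ni i$ and $B'\ni i+1$, and use the corner box $(\row(B'),\col(B))$ to rule out $B$ lying strictly northwest of $B'$. Cases (1), (2), and (4) are essentially correct. Case (3), however, is wrong. There you claim that ``the squeeze excludes strictly east'' and conclude that $i+1$ can never lie in a strictly higher row than $i$. That claim is false. Take $\lambda=(2,1)$, $m=3$, the standard tableau with first row $\{1\},\{3\}$ and second row $\{2\}$, and $i=2$. Here $i+1=3$ sits strictly northeast of $i$.

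The squeeze only works when $B$ is strictly northwest of $B'$. Then the corner $(\row(B'),\col(B))$ lies below $B$ and to the left of $B'$, so its entries are trapped strictly between $i$ and $i+1$. When $B'$ is strictly northeast of $B$, the corner $(\row(B'),\col(B))$ lies above $B$ and to the left of $B'$, so its entries are smaller than both $i$ and $i+1$. The other corner $(\row(B),\col(B'))$, if it is even in $\tyd(\lambda)$, has entries larger than both. Nothing is trapped. So, as written, your argument does not cover the common situation $\row_T(i+1)<\row_T(i)$.

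\textbf{The repair.} It is short, and it is exactly how the paper treats this configuration in its ``only if'' direction. If $\row_T(i+1)<\row_T(i)$, then $i$ is read before $i+1$ by (R1). Since $B'$ cannot be weakly northwest of $B$ (that would force $i+1<i$), we get $\col_T(i+1)>\col_T(i)$. Hence $i\notin\Des(T)$, and both sides of the equivalence are false. With this fix your proof agrees with the paper's. The paper organizes the same case analysis by direction of implication and then by comparing $\row_T(i)$ with $\row_T(i+1)$.

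\textbf{Smaller slips.} The trichotomy you derive from your key fact omits the possible configuration where $B'$ is directly below $B$; your case (2) nevertheless handles it. In case (2), the sentence asserting $i+1\le\min$ of every box in column $\col(B')$ in rows $\ge\row(B)$ is false for the boxes above $B'$. Only the corner-box argument that follows it is needed.
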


\begin{proof}
We first prove the ``if'' part.
Suppose that $i+1$ appears before $i$ in $w_T$.
Then, by the construction of $w_T$, we have $\row_T(i)\le \row_T(i+1)$.
If $\row_T(i) < \row_T(i+1)$, then $\col_T(i)\ge \col_T(i+1)$ by the row and column inequalities for set-valued tableaux.
Hence $i \in \Des(T)$ in this case.
It remains to consider the case $\row_T(i)=\row_T(i+1)$.
If $\col_T(i)=\col_T(i+1)$, then $i \in \Des(T)$.
Suppose that $\col_T(i)\ne \col_T(i+1)$.
Then $\col_T(i)<\col_T(i+1)$ by the row inequalities, and $i+1$ is minimal in its box.
If $i$ is not minimal in its box, then $i$ is read before all minimal entries in the row.
If $i$ is minimal in its box, then $i$ is read before $i+1$ since minimal entries are read from left to right.
Both cases contradict the assumption.
Thus $\col_T(i) \ge \col_T(i+1)$, and $i \in \Des(T)$.

We next prove the ``only if'' part.
Suppose that $i \in \Des(T)$.
If $\row_T(i)>\row_T(i+1)$, then $\col_T(i)<\col_T(i+1)$ by the row and column inequalities, a contradiction.
Thus $\row_T(i)\le \row_T(i+1)$.
If $\row_T(i)<\row_T(i+1)$, then the row containing $i+1$ is read before the row containing $i$.
Hence, $i+1$ appears before $i$ in $w_T$.
It remains to consider the case $\row_T(i)=\row_T(i+1)$.
Since $i \in \Des(T)$, we have $\col_T(i)\ge \col_T(i+1)$.
By the row inequalities, $\col_T(i)=\col_T(i+1)$, so $i$ and $i+1$ lie in the same box.
By the definition of the reading word, $i+1$ is read before $i$ within this box.
Hence, $i+1$ appears before $i$ in $w_T$.
\end{proof}

\begin{example}\label{ex: descent reading example}
Let
\[\ytableausetup{boxsize=2em}
T=
\begin{array}{l }
\begin{ytableau}
\scriptstyle 1,2 & \scriptstyle 3 & \scriptstyle 4,5,6 & \scriptstyle 7,9,11\\
\scriptstyle 8,10
\end{ytableau}  
\end{array}
\in \StdSVT_{11}((4,1)).
\]
We see that
\[
\Des(T)=\{1,4,5,7,9\}.
\]
By \cref{lem:descent box criterion}, the same descent set is obtained from the reading word.
Indeed, the bottom row contributes $10 \, 8$.
In the top row, the non-minimal entries $11$, $9$, $6$, $5$, and $2$ are read first, followed by the minimal entries $1$, $3$, $4$, and $7$.
Hence
\[
w_T=10\,8\,11\,9\,6\,5\,2\,1\,3\,4\,7.
\]
In particular, $i+1$ appears before $i$ in $w_T$ for $i=1,4,5,7,9$, and after $i$ for $i=2,3,6,8,10$.
\end{example}

The following theorem gives a fundamental expansion of the homogeneous component $G_{\lambda,m}$ in terms of standard set-valued tableaux.
This expansion can also be deduced from the Hecke insertion results of Patrias and Pylyavskyy \cite[Theorems~6.17 and 6.19]{PP16} and the relation between stable and weak stable Grothendieck functions in \cite[Proposition~9.22]{LP07}.
This deduction requires translating the tableau convention in \cite[Remark~6.10]{PP16} and complementing descent sets.
To avoid introducing several notions that are not otherwise needed, we include a direct proof by standardization.

\begin{theorem}\label{thm: stable groth F expansion}
For $\lambda \vdash n$ and $m\ge n$,
\begin{equation}\label{eq: stable groth F expansion}
G_{\lambda,m}
=\sum_{T\in \StdSVT_m(\lambda)}
F_{\comp(\Des(T))}.
\end{equation}
\end{theorem}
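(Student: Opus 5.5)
The plan is the standard standardization argument. For a semistandard set-valued tableau $S\in\SVT(\lambda)$ with $|S|=m$, \cref{lem:standardization-is-standard} gives $T=\std(S)\in\StdSVT_m(\lambda)$. Grouping the terms of $G_{\lambda,m}=\sum_{S\in\SVT(\lambda),\,|S|=m}\bfx^{\wt(S)}$ by the standardization of $S$, it suffices to show that for each $T\in\StdSVT_m(\lambda)$,
\[
\sum_{S:\ \std(S)=T}\bfx^{\wt(S)}=F_{\comp(\Des(T))}.
\]
The right-hand side is the sum of $x_{i_1}\cdots x_{i_m}$ over weakly increasing sequences $i_1\le\cdots\le i_m$ with $i_j<i_{j+1}$ whenever $j\in\Des(T)$. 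Given such a sequence, define a filling $S$ by replacing each entry $j$ of $T$ by $i_j$. The claim is that $S\mapsto (i_1,\dots,i_m)$, where $i_j$ is the value of $S$ at the position of $j$ in $T=\std(S)$, is a bijection between $\{S:\std(S)=T\}$ and these sequences, and that it respects weights.

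First, the forward direction. If $\std(S)=T$, the labels are assigned in increasing order of value, so $i_1\le\cdots\le i_m$. Suppose $j\in\Des(T)$ but $i_j=i_{j+1}$. Equal values are labelled in $w_S$-order, so $j$ precedes $j+1$ in $w_S$. The reading word is determined by box positions and the relative order within each box. Relative order within boxes is preserved by standardization, except when two entries of one box are equal, which cannot happen. Hence $w_S$ and $w_T$ list the same positions in the same order, and $j$ precedes $j+1$ in $w_T$. This contradicts \cref{lem:descent box criterion}.

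Second, the reverse direction, which I expect to be the main obstacle. Given an admissible sequence, I must check three things:
\begin{enumerate}[label=(\roman*)]
\item $S$ has genuine sets in each box, i.e.\ no repeated value in a box;
\item $S$ satisfies the semistandard inequalities;
\item $\std(S)=T$.
\end{enumerate}

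For (i), note that within a box consecutive labels $j<j'$ are not necessarily adjacent integers. It is enough that some $k$ with $j\le k<j'$ lies in $\Des(T)$. Otherwise $i_j\le\cdots\le i_{j'}$ has no forced strict step, and the path $j\to j+1\to\cdots\to j'$ contains only ascents. Each ascent moves weakly right in column, but we return to the same column, so every step stays in that column. A non-descent $k$ means $\col_T(k)<\col_T(k+1)$ strictly, which is a contradiction. The same reasoning handles column strictness: going down a column from $j$ to $j'$ needs a descent in between, since ascents strictly increase the column index. Row weak inequalities follow directly from monotonicity.

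For (iii), I must show that among positions with equal value, the $w_S$-order agrees with label order. Values equal on a block $j,\dots,j'$ with no descents means that $j,\dots,j'$ lie in strictly increasing columns. By \cref{lem:descent box criterion}, each appears before the next in $w_T$. Since $w_S$ and $w_T$ order positions identically (relative order within boxes is preserved, and (i) gives distinct values), the labels agree with $w_S$-order.

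Finally, weights match by construction, and summing over $T$ yields \cref{eq: stable groth F expansion}.
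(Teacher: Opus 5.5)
Your proposal is correct and follows essentially the same standardization argument as the paper. You group the terms of $G_{\lambda,m}$ by $\std(S)$, then biject $\{S : \std(S)=T\}$ with weakly increasing sequences that are strict on $\Des(T)$, using \cref{lem:descent box criterion} in both directions and the observation that a descent-free stretch forces strictly increasing columns (for box-distinctness, column strictness, and $\std(S)=T$); the fact that $S$ is recovered as the filling $j\mapsto i_j$ of $T$, so the two maps are mutually inverse, is left implicit but is immediate.
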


\begin{proof}
For each $S \in \SVT(\lambda)$ with $|S|=m$, $\std(S)\in\StdSVT_m(\lambda)$ by \cref{lem:standardization-is-standard}.
It follows that
\[
G_{\lambda,m}
=\sum_{\substack{S \in \SVT(\lambda) \\ |S|=m}}\bfx^{\wt(S)}
=\sum_{T \in \StdSVT_m(\lambda)} \sum_{\substack{S \in \SVT(\lambda) \\ \std(S) = T}}
\bfx^{\wt(S)}.
\]
Thus it suffices to show that, for each $T \in \StdSVT_m(\lambda)$,
\begin{align}\label{eq: stable groth F expansion for a fixed T}
\sum_{\substack{S \in \SVT(\lambda) \\ \std(S)=T}} \bfx^{\wt(S)}
= \sum_{\substack{1 \le i_1 \le i_2 \le \cdots \le i_m \\ i_j < i_{j+1} \text{ if } j \in \Des(T)}} x_{i_1}x_{i_2}\cdots x_{i_m}.
\end{align}

Let $T\in\StdSVT_m(\lambda)$.
Choose a sequence $\mathbf{i}=(i_1,i_2,\ldots,i_m)$ of integers satisfying
\begin{align}\label{eq: an increasing seq for T}
1 \le i_1\le i_2\le\cdots\le i_m
\quad\text{and}\quad
i_j<i_{j+1}\quad\text{for all }j\in\Des(T).
\end{align}
Let $S_{T,\mathbf{i}}$ be the filling obtained from $T$ by replacing each entry $j$ of $T$ with $i_j$.
We claim that $S_{T,\mathbf{i}}$ is a semistandard set-valued tableau and that
$\std(S_{T,\mathbf{i}})=T$.

We first prove that $S_{T,\mathbf{i}}$ is a semistandard set-valued tableau.
We need to show row inequalities, column inequalities, and distinctness inside each box.
Let $p<q$ be two labels of $T$.
Then $i_p\le i_q$.
If $\row_T(p)=\row_T(q)$ and $\col_T(p)+1=\col_T(q)$, then the row inequality follows from $i_p\le i_q$.
Suppose that $\row_T(q)=\row_T(p)+1$ and $\col_T(p)=\col_T(q)$.
Then there exists at least one integer in $[p,q-1] \cap \Des(T)$; otherwise,
\[
\col_T(p)<\col_T(p+1)<\cdots<\col_T(q),
\]
which contradicts $\col_T(p)=\col_T(q)$.
Since the sequence in \cref{eq: an increasing seq for T} is chosen to be strict at the descents of $T$, we have $i_p < i_q$.
This proves the column inequalities.
If $\row_T(p)=\row_T(q)$ and $\col_T(p)=\col_T(q)$, then $[p,q-1]\cap\Des(T)\ne\varnothing$ by the argument above, and hence $i_p<i_q$.
This shows that no box contains the same integer twice.
Therefore $S_{T,\mathbf{i}}$ is a semistandard set-valued tableau.

We next prove that $\std(S_{T,\mathbf{i}})=T$.
Replacing each entry $j$ of $T$ by $i_j$ preserves the reading order of the corresponding entries.
Let $[u,v]$ be a maximal interval of indices on which $\mathbf{i}$ is constant, and denote the common value by $a$.
By \cref{eq: an increasing seq for T}, we have $r\notin\Des(T)$ for all $u \le r < v$.
By \cref{lem:descent box criterion}, $r$ appears before $r+1$ in $w_T$ for every $u \le r < v$.
Since the entries $u,u+1,\ldots,v$ of $T$ are all replaced by $a$ and their reading order is preserved, \cref{alg:standardization} assigns the labels $u,u+1,\ldots,v$ to the occurrences of $a$.
Applying this argument to every maximal interval on which $\mathbf{i}$ is constant gives $\std(S_{T,\mathbf{i}})=T$.

Conversely, let $S$ be a semistandard set-valued tableau with $\std(S)=T$.
Let $k_j$ be the integer in $S$ that is replaced by $j$ when $S$ is standardized to $T$.
Then we have a sequence $\mathbf{k}=(k_1,k_2,\ldots,k_m)$ of integers satisfying
\[
1\le k_1\le k_2\le\cdots\le k_m.
\]
Let $j\in\Des(T)$. 
Then $j+1$ appears before $j$ in $w_T$ by \cref{lem:descent box criterion}.
Suppose that $k_j=k_{j+1}$.
By \cref{alg:standardization}, the occurrence that is replaced by $j$ appears before the occurrence that is replaced by $j+1$ in $w_S$.
Since corresponding entries appear in the same order in $w_S$ and $w_T$, $j$ appears before $j+1$ in $w_T$.
This contradicts the fact that $j+1$ appears before $j$ in $w_T$.
It follows that $k_j<k_{j+1}$ for all $j\in\Des(T)$, so $\mathbf{k}$ satisfies \cref{eq: an increasing seq for T}.
By the definition of $\mathbf{k}$, we also have $S=S_{T,\mathbf{k}}$.

Consequently, the equality in \cref{eq: stable groth F expansion for a fixed T} holds, as desired.
\end{proof}

\subsection[A 0-Hecke action on standard set-valued tableaux]{An $H_m(0)$-action on $\StdSVT_m(\lambda)$}

Hereafter, let $m$ be an integer with $m \ge n$.
For $1\le i\le m-1$, let $s_i\cdot T$ be the set-valued filling obtained from $T$ by interchanging the entries $i$ and $i+1$.
For each $1\le i\le m-1$, define a linear operator $\uppi_i:\C \StdSVT_m(\lambda) \rightarrow \C \StdSVT_m(\lambda)$ by letting
\begin{align}\label{eq: svt action}
\uppi_i(T):=
\begin{cases}
T & \text{if $\col_T(i)<\col_T(i+1)$},\\
0 & \text{if $\col_T(i)=\col_T(i+1)$},\\
s_i \cdot T & \text{if $\col_T(i)>\col_T(i+1)$}
\end{cases}
\end{align}
for $T\in\StdSVT_m(\lambda)$ and extending by linearity.

The following theorem is the first main result of this subsection.

\begin{theorem}\label{thm: svt action is 0hecke}
Let $\lambda \vdash n$ and $m\ge n$.
The operators $\uppi_1, \uppi_2, \ldots, \uppi_{m-1}$ on $\C \StdSVT_m(\lambda)$ define an $H_m(0)$-action on $\C \StdSVT_m(\lambda)$.
\end{theorem}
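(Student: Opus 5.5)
We must check three things: each $\uppi_i$ is well defined (i.e.\ $s_i\cdot T\in\StdSVT_m(\lambda)$ when $\col_T(i)>\col_T(i+1)$), $\uppi_i^2=\uppi_i$, and the braid and commutation relations \cref{eq: relations of pi}.

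\emph{Well-definedness.} Suppose $\col_T(i)>\col_T(i+1)$. By the row inequalities, $i$ and $i+1$ cannot lie in the same row, since then $i+1$ would be to the right of $i$. So $\row_T(i)<\row_T(i+1)$ (if $\row_T(i)>\row_T(i+1)$, then $i$ would be weakly southwest of $i+1$, contradicting $\col_T(i)>\col_T(i+1)$). Thus $i+1$ lies strictly southwest of $i$, in a different row and column. Swapping two consecutive labels changes only the comparison between $i$ and $i+1$. These two boxes are neither in the same row nor in the same column, and any box adjacent to one of them and comparable via a row or column inequality compares to $i$ and $i+1$ in the same way. The one delicate point is the order within a box, since sets are unordered. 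Hence $s_i\cdot T$ is again standard. Moreover, in $s_i\cdot T$ we have $\col(i)<\col(i+1)$.

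\emph{Idempotence.} If $\col_T(i)<\col_T(i+1)$ or $\col_T(i)=\col_T(i+1)$, then $\uppi_i^2 T=\uppi_i T$ trivially. In the third case, $\uppi_i T=s_i\cdot T$, in which $i$ lies in a column left of $i+1$, so $\uppi_i$ fixes it.

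\emph{Commutation for $|i-j|>1$.} The operators $\uppi_i$ and $\uppi_j$ involve the disjoint label sets $\{i,i+1\}$ and $\{j,j+1\}$. Swapping $j,j+1$ does not change the columns of $i,i+1$, so the case applied by $\uppi_i$ is the same before and after $\uppi_j$, and vice versa. Commutation follows directly.

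\emph{Braid relation.} This is the main obstacle. I would reduce it to a local computation on the labels $i,i+1,i+2$. Let $a,b,c$ be the columns of $i,i+1,i+2$ in $T$. Each operator $\uppi_i,\uppi_{i+1}$ only permutes these three labels among their positions (or kills the tableau), so the action on $T$ depends only on the triple $(a,b,c)$ together with knowing that every intermediate swap yields a valid tableau, which was shown above. Hence both sides of $\uppi_i\uppi_{i+1}\uppi_i T=\uppi_{i+1}\uppi_i\uppi_{i+1}T$ can be computed by tracking how the column triple is permuted. This is exactly the standard ``$\pi_i$ sorts adjacent entries into increasing order, and kills when equal'' action on words with possibly repeated letters. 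Concretely, reading $(a,b,c)$ as a word, $\uppi_i$ acts on positions $1,2$: it fixes the word if $a<b$, sends $T$ to $0$ if $a=b$, and swaps the entries if $a>b$. Similarly, $\uppi_{i+1}$ acts on positions $2,3$. The braid relation for this action on all words of length $3$ is a finite check, organized by the pattern of equalities among $a,b,c$. If all three columns are distinct, both sides sort the word to increasing order, giving the same tableau. If some two coincide, one checks that both sides give $0$: each side eventually brings the two equal letters adjacent and compares them. When $a=c\neq b$, this has to be verified by chasing each of the orders $a<b$ and $a>b$.

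One subtlety must be flagged: labels in the same column but different boxes (different rows) also count as ``equal'', giving $0$. This is consistent with the word model, since only column values matter.
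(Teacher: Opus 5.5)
Your proposal is correct and follows essentially the same route as the paper. Well-definedness holds because $i+1$ lies strictly southwest of $i$, so no tableau inequality compares the two boxes directly; this also disposes of your stray remark about order within a box. Idempotence and far commutation follow by tracking the columns of the relevant labels, and your braid check on the triple $(\col_T(i),\col_T(i+1),\col_T(i+2))$ is a tidier packaging of the paper's case analysis: distinct columns mean both sides bubble-sort to the same tableau, and any coincidence means both sides vanish.
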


In order to prove this theorem, let us establish some necessary lemmas.

\begin{lemma}\label{lem: admissible swap}
Let $T\in\StdSVT_m(\lambda)$ and $1 \le i \le m-1$. 
If $\col_T(i)>\col_T(i+1)$, then $s_i\cdot T\in\StdSVT_m(\lambda)$.
\end{lemma}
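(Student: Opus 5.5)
Since $s_i\cdot T$ only interchanges the labels $i$ and $i+1$, it still uses each of $1,2,\ldots,m$ exactly once and has nonempty boxes. So the plan is to check the row and column inequalities only at the two boxes that change. Let $B$ be the box containing $i$ in $T$ and $B'$ the box containing $i+1$. By hypothesis $\col_T(i)>\col_T(i+1)$, so $B\neq B'$. The filling $s_i\cdot T$ is obtained by replacing $i$ with $i+1$ in $B$ and $i+1$ with $i$ in $B'$.

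The key observation is that $i$ and $i+1$ are consecutive integers. Hence for any entry $x\notin\{i,i+1\}$ we have $x<i \iff x<i+1$ and $x>i \iff x>i+1$. Every comparison between one of the two swapped labels and any other label therefore has the same outcome in $T$ and in $s_i\cdot T$. The only comparison whose outcome can flip is the one between the two swapped labels themselves. So $s_i\cdot T$ can fail to be a tableau only if $B$ and $B'$ are adjacent in a row or a column, that is, $B'$ lies directly to the left of $B$, or directly above or below it.

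Next I rule out those adjacencies using the hypothesis. Column adjacency would force $\col_T(i)=\col_T(i+1)$, which is excluded. Suppose $B'$ is immediately left of $B$ in the same row. The row inequality for $T$ gives $\max B'<\min B$, and $i+1\in B'$, $i\in B$, so $i+1\le\max B'<\min B\le i$, a contradiction. (If row adjacency were meant more broadly as any two boxes in the same row, the same argument works: the row condition forces all entries of $B'$ to be less than all entries of $B$.) So $B$ and $B'$ are not adjacent, and all row and column inequalities of $T$ transfer verbatim to $s_i\cdot T$. Hence $s_i\cdot T\in\StdSVT_m(\lambda)$.

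There is no real obstacle here. The one point needing care is stating the consecutiveness observation precisely, including comparisons between entries of the same box (these are irrelevant, since boxes are sets), and checking that min/max comparisons with neighbouring boxes are preserved even when the swapped label is the min or max of its box.
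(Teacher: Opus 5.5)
Your proof is correct and is essentially the paper's argument. The paper notes that $\col_T(i)>\col_T(i+1)$, together with the row and column inequalities, forces $\row_T(i)<\row_T(i+1)$, so the boxes of $i$ and $i+1$ share neither a row nor a column, and it then asserts that the swap cannot break any inequality; your observation that the consecutive labels $i$ and $i+1$ compare identically with every other entry makes explicit the step the paper leaves implicit.
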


\begin{proof}
Assume that $\col_T(i)>\col_T(i+1)$.
By the row and column inequalities for set-valued tableaux, $\row_T(i)<\row_T(i+1)$.
This implies that switching $i$ and $i+1$ in $T$ does not violate the row and column inequalities.
Therefore $s_i\cdot T\in\StdSVT_m(\lambda)$.
\end{proof}

\begin{lemma}\label{lem: svt action idempotence}
For $1\le i\le m-1$, $\uppi_i^2=\uppi_i$.
\end{lemma}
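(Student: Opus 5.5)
Since $\uppi_i$ is linear, it is enough to check $\uppi_i^2(T)=\uppi_i(T)$ for each basis element $T\in\StdSVT_m(\lambda)$. We split into the three cases of \cref{eq: svt action}, according to how $\col_T(i)$ compares with $\col_T(i+1)$.

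\emph{Case $\col_T(i)<\col_T(i+1)$.} Here $\uppi_i(T)=T$, so $\uppi_i^2(T)=\uppi_i(T)=T$.

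\emph{Case $\col_T(i)=\col_T(i+1)$.} Here $\uppi_i(T)=0$, and linearity gives $\uppi_i^2(T)=\uppi_i(0)=0=\uppi_i(T)$.

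\emph{Case $\col_T(i)>\col_T(i+1)$.} This is the only case with content. We have $\uppi_i(T)=s_i\cdot T$, and $s_i\cdot T$ lies in $\StdSVT_m(\lambda)$ by \cref{lem: admissible swap}. Hence $\uppi_i$ may be applied to it through the defining formula. In $s_i\cdot T$ the entries $i$ and $i+1$ have exchanged boxes, and no other entry has moved. Therefore
\[
\col_{s_i\cdot T}(i)=\col_T(i+1)<\col_T(i)=\col_{s_i\cdot T}(i+1).
\]
So $s_i\cdot T$ falls into the first case, and $\uppi_i(s_i\cdot T)=s_i\cdot T$. This gives $\uppi_i^2(T)=s_i\cdot T=\uppi_i(T)$.

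The only point needing care is in the third case: we must confirm that $s_i\cdot T$ is again a standard set-valued tableau before applying $\uppi_i$ to it, and this is exactly what \cref{lem: admissible swap} provides. Once that is known, reading off the columns of $i$ and $i+1$ in $s_i\cdot T$ is immediate.
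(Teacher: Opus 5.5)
Your proof is correct and follows essentially the same argument as the paper: the fixed and zero cases are immediate, and in the swap case the columns of $i$ and $i+1$ in $s_i\cdot T$ are reversed, so $\uppi_i$ fixes $s_i\cdot T$. Your explicit appeal to \cref{lem: admissible swap} to confirm $s_i\cdot T\in\StdSVT_m(\lambda)$ is a point the paper leaves implicit in this lemma and instead invokes when proving \cref{thm: svt action is 0hecke}.
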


\begin{proof}
Let $T\in\StdSVT_m(\lambda)$.
If $\uppi_i(T)=T$ or $\uppi_i(T)=0$, then the assertion is clear.
Suppose that $\uppi_i(T)=s_i\cdot T$.
Then we have $\col_T(i) > \col_T(i+1)$, equivalently $\col_{s_i\cdot T}(i)<\col_{s_i\cdot T}(i+1)$.
It follows that $\uppi_i(s_i\cdot T)=s_i\cdot T$, and hence $\uppi_i^2(T)=\uppi_i(T)$.
\end{proof}

\begin{lemma}\label{lem: svt action commutation}
For $1\le i,j\le m-1$ with $|i-j|>1$, $\uppi_i\uppi_j=\uppi_j\uppi_i$.
\end{lemma}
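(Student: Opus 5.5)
We check the identity $\uppi_i\uppi_j(T)=\uppi_j\uppi_i(T)$ on each basis element $T\in\StdSVT_m(\lambda)$. The point is that, since $|i-j|>1$, the sets $\{i,i+1\}$ and $\{j,j+1\}$ are disjoint. Each operator $\uppi_i$ only inspects and possibly moves the entries $i$ and $i+1$. Hence applying $\uppi_j$ in the nonzero case does not change the positions of $i$ and $i+1$:
\[
\col_{\uppi_j(T)}(i)=\col_T(i), \qquad \col_{\uppi_j(T)}(i+1)=\col_T(i+1).
\]
Here $\uppi_j(T)$ is either $T$ or $s_j\cdot T$, and $s_j\cdot T$ lies in $\StdSVT_m(\lambda)$ by \cref{lem: admissible swap}. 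The same holds with $i$ and $j$ interchanged.

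We split into cases according to whether each of $\uppi_i(T)$ and $\uppi_j(T)$ is $0$, $T$, or a swap.

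\textbf{Zero case.} Suppose one of them is zero, say $\col_T(i)=\col_T(i+1)$. Then $\uppi_i(T)=0$, so $\uppi_j\uppi_i(T)=0$. Also $\uppi_j(T)\in\{0,T,s_j\cdot T\}$, and in the last two cases the columns of $i$ and $i+1$ are unchanged. So $\uppi_i$ kills $\uppi_j(T)$ as well, and both sides are zero.

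\textbf{Nonzero case.} Each operator acts by the identity or by its swap, determined by the same column comparison before and after the other operator is applied. Both compositions therefore yield the filling obtained from $T$ by performing the applicable swaps. Swapping $i\leftrightarrow i+1$ and $j\leftrightarrow j+1$ commute as operations on fillings, because they involve disjoint pairs of entries. So both sides are equal.

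No step here is a real obstacle. The only point needing care is that the intermediate filling stays a standard set-valued tableau, so that the column functions are defined for it; this is exactly what \cref{lem: admissible swap} guarantees. Linearity then extends the identity from basis elements to all of $\C\StdSVT_m(\lambda)$.
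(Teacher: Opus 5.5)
Your proposal is correct and follows essentially the same argument as the paper: since $\{i,i+1\}$ and $\{j,j+1\}$ are disjoint, the columns of $i$ and $i+1$ are the same in $T$ and in $s_j\cdot T$ (and symmetrically for $j,j+1$). Hence each operator acts the same way before and after the other is applied, and the two swaps commute as operations on fillings. The only difference is the case split: you separate ``some factor is zero'' from ``both nonzero,'' whereas the paper separates ``$\uppi_i(T)\in\{T,0\}$'' from ``both are swaps.''
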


\begin{proof}
Let $T\in\StdSVT_m(\lambda)$.
Suppose that $\uppi_i(T)=T$ or $\uppi_i(T)=0$.
If $\uppi_j(T)=T$ or $\uppi_j(T)=0$, then it is clear that $\uppi_i\uppi_j(T) = \uppi_j\uppi_i(T)$.
Suppose that $\uppi_j(T)=s_j\cdot T$.
Since $|i-j|>1$, applying $s_j$ does not change the columns containing $i$ and $i+1$.
Thus
\[
\col_T(i)=\col_{s_j\cdot T}(i)
\quad\text{and}\quad
\col_T(i+1)=\col_{s_j\cdot T}(i+1).
\]
Combining this with the assumption that $\uppi_i(T)=T$ or $\uppi_i(T)=0$, we have
\[
\uppi_i\uppi_j(T)=\uppi_j\uppi_i(T).
\]

By symmetry, it remains to consider the case where $\uppi_i(T)=s_i\cdot T$ and $\uppi_j(T)=s_j\cdot T$.
Since $|i-j|>1$, applying $s_j$ does not change the columns containing $i$ and $i+1$, and applying $s_i$ does not change the columns containing $j$ and $j+1$.
Thus
\[
\col_T(i)=\col_{s_j\cdot T}(i),
\quad
\col_T(i+1)=\col_{s_j\cdot T}(i+1),
\]
and
\[
\col_T(j)=\col_{s_i\cdot T}(j),
\quad
\col_T(j+1)=\col_{s_i\cdot T}(j+1).
\]
It follows that
\[
\uppi_i(s_j\cdot T)=s_i\cdot(s_j\cdot T)
\quad\text{and}\quad
\uppi_j(s_i\cdot T)=s_j\cdot(s_i\cdot T).
\]
Since $s_i\cdot(s_j\cdot T)=s_j\cdot(s_i\cdot T)$, we have
\[
\uppi_i \uppi_j (T)
= \uppi_i(s_j\cdot T)
= s_i\cdot(s_j\cdot T)
= s_j\cdot(s_i\cdot T)
= \uppi_j(s_i\cdot T)
= \uppi_j \uppi_i (T).
\]
This proves the assertion.
\end{proof}

\begin{lemma}\label{lem: svt action braid}
For $1\le i\le m-2$, $\uppi_i\uppi_{i+1}\uppi_i=\uppi_{i+1}\uppi_i\uppi_{i+1}$.
\end{lemma}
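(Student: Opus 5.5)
The idea is to reduce the braid relation to a finite case check on the relative positions of $i$, $i+1$, $i+2$ in $T$. Fix $T\in\StdSVT_m(\lambda)$ and write $c_1=\col_T(i)$, $c_2=\col_T(i+1)$, $c_3=\col_T(i+2)$. Every operator $\uppi_i,\uppi_{i+1}$ either fixes a tableau, kills it, or permutes the entries $i,i+1,i+2$ among their boxes. Hence both sides of the braid relation applied to $T$ are either $0$ or a tableau obtained from $T$ by some permutation of these three entries. The columns of the three boxes involved are $c_1,c_2,c_3$ in some order. Moreover, each step of either side is determined only by comparing columns of the current positions of two consecutive labels. So the whole computation depends only on the relative order (with ties) of the triple $(c_1,c_2,c_3)$, together with which labels share a box.

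The main step is to enumerate the possible weak orderings of $(c_1,c_2,c_3)$: the $13$ ordered set partitions of three elements. For each, we compute $\uppi_i\uppi_{i+1}\uppi_i(T)$ and $\uppi_{i+1}\uppi_i\uppi_{i+1}(T)$ by tracking the column word. If we record, after each step, the sequence of columns read at the positions of labels $i,i+1,i+2$, then $\uppi_j$ acts on this column sequence exactly as the $0$-Hecke generator acts on words in the model where
\begin{itemize}
\item a strict ascent is fixed,
\item an equality gives $0$,
\item a strict descent is swapped.
\end{itemize}
\cref{lem: admissible swap} guarantees that every swap stays inside $\StdSVT_m(\lambda)$. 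The one thing to check is that ties are handled consistently. Once a $0$ appears on one side, it should also appear on the other. This is the main case to handle. Equal columns at positions $j,j+1$ mean the corresponding entries lie in the same box, since two distinct boxes in one column cannot hold consecutive labels in a way compatible with the column inequalities. Swapping entries therefore never moves an entry out of its column. As a result, the column sequence is just the sequence $(c_1,c_2,c_3)$ permuted.

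So the verification reduces to this: the operators $\bar\pi_1,\bar\pi_2$ on $\C$-valued words of length $3$ over an ordered alphabet satisfy the braid relation, with $\bar\pi_j$ sending a word to itself, $0$, or its swap according as $w_j<w_{j+1}$, $w_j=w_{j+1}$, $w_j>w_{j+1}$. For distinct letters this is the standard sorting action, and both sides give the increasing rearrangement. Both sides are the sorted word because $\pi_{w_0}$ sorts. When two letters are equal, both sides are $0$: any path of sorting reaches a comparison of the two equal letters, since the full sort $w_0$ compares every pair. When all three letters are equal, both sides are trivially $0$.

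The final step is to argue that the resulting tableaux agree and not merely their column words. Both sides apply to $T$ the same permutation of the labels $i,i+1,i+2$, namely the one sorting columns. So they yield the same filling, which completes the check.
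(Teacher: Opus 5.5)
Your reduction is the paper's own: both proofs follow the column triple $(\col(i),\col(i+1),\col(i+2))$. On this triple $\uppi_i$ and $\uppi_{i+1}$ fix strict ascents, kill ties and swap strict descents. The paper does the resulting case analysis on $\bfc(T)=(a,b,c)$ by hand, and your final step is fine: for pairwise distinct columns, the sorted column word determines the permutation of $i,i+1,i+2$ and hence the tableau. However, two of your supporting sentences are false. First, equal columns at consecutive labels do \emph{not} force a common box. In shape $(1,1)$ with $\{1\}$ above $\{2\}$, or in the tableau of shape $(2,1)$ with first row $\{1\},\{4\}$ and second row $\{2,3\}$, the labels $1$ and $2$ share column $1$ but not a box. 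This claim is not needed, since $\uppi_j$ is defined purely through columns. The triple is permuted simply because $s_j\cdot T$ exchanges the boxes of $j$ and $j+1$ and leaves the third label alone. The sentence saying swapping ``never moves an entry out of its column'' is also wrong (a swap moves $j$ into the column of $j+1$), so delete both.

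The second false sentence is load-bearing, and it supports the case you yourself call the main one. The principle ``the full sort $w_0$ compares every pair'' fails for this $0$-Hecke sorting. On the word $(1,2,3)$, both $\bar\pi_1\bar\pi_2\bar\pi_1$ and $\bar\pi_2\bar\pi_1\bar\pi_2$ compare only the letters $1,2$ and $2,3$, never $1,3$. The ``every pair crosses'' property holds for the group action along a reduced word of $w_0$, where every step swaps; here ascents are left alone. Your conclusion that both sides vanish whenever there is a tie is true, but as written it is unproved. There are two ways to close it:
\begin{itemize}
\item Check the seven tie patterns directly: $(x,x,x)$, and $(x,x,y)$, $(x,y,x)$, $(y,x,x)$ with $x<y$ and with $x>y$. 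This is exactly what the tie subcases of the paper's Cases 1--3 do.
\item Argue by perturbation. Suppose a run never compares the two equal letters. Then raising or lowering one of them by a small $\epsilon$ gives runs with the same fix/swap decisions, so the two letters end in the same relative order. But both perturbed words have distinct letters, so both runs end sorted, and the two sorted outputs place those letters in opposite orders. Hence the tied letters are compared at some step and the result is $0$.
\end{itemize}
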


\begin{proof}
Let $T\in\StdSVT_m(\lambda)$ and $1\le i\le m-2$.
For $U\in\StdSVT_m(\lambda)$, write
\[
\bfc(U)=(\col_U(i),\col_U(i+1),\col_U(i+2)),
\]
and set $\bfc(T)=(a,b,c)$.
We consider three cases according to the value of $\uppi_i(T)$.

\medskip
\noindent
{\bf Case 1: $\uppi_i(T)=T$.}
In this case, $a<b$.
If $b<c$, then $\uppi_i$ and $\uppi_{i+1}$ fix $T$, and hence $\uppi_i\uppi_{i+1}\uppi_i(T) = T = \uppi_{i+1}\uppi_i\uppi_{i+1}(T)$.
If $b=c$, then $\uppi_{i+1}(T)=0$, so $\uppi_i\uppi_{i+1}\uppi_i(T) = 0 = \uppi_{i+1}\uppi_i\uppi_{i+1}(T)$.

Suppose that $c < b$.
Then $\uppi_{i+1}(T) = s_{i+1} \cdot T$ and $\bfc(\uppi_{i+1}(T))=(a,c,b)$.
If $a<c$, then $\uppi_i\uppi_{i+1}(T)=\uppi_{i+1}(T)$.
Since $c<b$, we also have $\uppi_{i+1}^2(T)=\uppi_{i+1}(T)$.
It follows that
\[
\uppi_i\uppi_{i+1}\uppi_i(T) = \uppi_i\uppi_{i+1}(T) = \uppi_{i+1}(T) = \uppi_{i+1}^2(T) = \uppi_{i+1}\uppi_i\uppi_{i+1}(T).
\]
If $a=c$, then $\uppi_i\uppi_{i+1}(T) = 0$, and hence
\[
\uppi_i\uppi_{i+1}\uppi_i(T) = \uppi_i\uppi_{i+1}(T) = 0 = \uppi_{i+1}\uppi_i\uppi_{i+1}(T).
\]
If $c < a$, then $\uppi_i\uppi_{i+1}(T) = s_i \cdot \uppi_{i+1}(T) = s_i \cdot (s_{i+1} \cdot T)$, so $\bfc(\uppi_i \uppi_{i+1}(T)) = (c,a,b)$.
Since $a<b$, we have $\uppi_{i+1} \uppi_i\uppi_{i+1}(T) = s_i \cdot (s_{i+1} \cdot T)$.
Therefore
\[
\uppi_i\uppi_{i+1}\uppi_i(T) = \uppi_i\uppi_{i+1}(T) = s_i \cdot (s_{i+1} \cdot T) = \uppi_{i+1}\uppi_i\uppi_{i+1}(T).
\]

\medskip
\noindent
{\bf Case 2: $\uppi_i(T)=0$.}
In this case, $a=b$.
If $b<c$, then $\uppi_{i+1}(T)=T$, and hence
\[
\uppi_i\uppi_{i+1}\uppi_i(T) = 0 = \uppi_{i+1}\uppi_i(T) = \uppi_{i+1}\uppi_i\uppi_{i+1}(T).
\]
If $b=c$, then $\uppi_i(T)=0=\uppi_{i+1}(T)$, so
\[
\uppi_i\uppi_{i+1}\uppi_i(T) = 0 = \uppi_{i+1}\uppi_i\uppi_{i+1}(T).
\]

Suppose that $c<b$.
Then $\uppi_{i+1}(T)=s_{i+1} \cdot T$ and $\bfc(\uppi_{i+1}(T))=(a,c,b)$.
Since $a=b>c$, we have $\uppi_i\uppi_{i+1}(T)=s_i \cdot (s_{i+1} \cdot T)$ and $\bfc(\uppi_i\uppi_{i+1}(T))=(c,a,b)$.
Since $a=b$, we have $\uppi_{i+1}\uppi_i\uppi_{i+1}(T)=0$.
Therefore
\[
\uppi_i\uppi_{i+1}\uppi_i(T) = 0 = \uppi_{i+1}\uppi_i\uppi_{i+1}(T).
\]

\medskip
\noindent
{\bf Case 3: $\uppi_i(T)=s_i\cdot T$.}
In this case, $b<a$.
We have $\bfc(\uppi_i(T))=(b,a,c)$.
Suppose that $b<c$.
Then $\uppi_{i+1}(T)=T$.
If $a<c$, then $\uppi_{i+1}\uppi_i(T)=\uppi_i(T)$.
Since $b<a$, we also have $\uppi_i^2(T)=\uppi_i(T)$.
It follows that
\[
\uppi_i\uppi_{i+1}\uppi_i(T) = \uppi_i^2(T) = \uppi_i(T) = \uppi_{i+1}\uppi_i(T) = \uppi_{i+1}\uppi_i\uppi_{i+1}(T).
\]
If $a=c$, then $\uppi_{i+1}\uppi_i(T)=0$, and hence
\[
\uppi_i\uppi_{i+1}\uppi_i(T) = 0 = \uppi_{i+1}\uppi_i(T) = \uppi_{i+1}\uppi_i\uppi_{i+1}(T).
\]
If $c<a$, then $\uppi_{i+1}\uppi_i(T)=s_{i+1} \cdot (s_i \cdot T)$ and $\bfc(\uppi_{i+1}\uppi_i(T))=(b,c,a)$.
Since $b<c$, we have $\uppi_i\uppi_{i+1}\uppi_i(T)=\uppi_{i+1} \uppi_i (T)$.
Therefore
\[
\uppi_i\uppi_{i+1}\uppi_i(T) = \uppi_{i+1}\uppi_i(T) = \uppi_{i+1}\uppi_i\uppi_{i+1}(T).
\]

Suppose that $b=c$.
Then $\uppi_{i+1}(T)=0$, so $\uppi_{i+1}\uppi_i\uppi_{i+1}(T)=0$.
Since $c<a$, we have $\uppi_{i+1}\uppi_i(T)=s_{i+1} \cdot (s_i \cdot T)$ and $\bfc(\uppi_{i+1}\uppi_i(T))=(b,c,a)$.
This together with the assumption $b=c$ implies that $\uppi_i\uppi_{i+1}\uppi_i(T)=0$.
Thus
\[
\uppi_i\uppi_{i+1}\uppi_i(T) = 0 = \uppi_{i+1}\uppi_i\uppi_{i+1}(T).
\]

Finally, suppose that $c<b$.
Since $c<b<a$, we have $\uppi_{i+1}\uppi_i(T)=s_{i+1} \cdot (s_i \cdot T)$ and $\bfc(\uppi_{i+1}\uppi_i(T))=(b,c,a)$.
This together with the assumption $c<b$ implies that 
\[
\uppi_i\uppi_{i+1}\uppi_i(T)=s_i \cdot (s_{i+1} \cdot (s_i \cdot T)).
\]
On the other hand, by the inequality $c<b<a$, we have $\uppi_{i+1}(T)=s_{i+1} \cdot T$ and $\bfc(\uppi_{i+1}(T))=(a,c,b)$.
Since $c < a$, we have $\uppi_i\uppi_{i+1}(T)=s_i \cdot (s_{i+1} \cdot T)$ and $\bfc(\uppi_i\uppi_{i+1}(T))=(c,a,b)$.
This together with the inequality $c<a$ implies that
\[
\uppi_{i+1}\uppi_i\uppi_{i+1}(T)=s_{i+1} \cdot (s_i \cdot (s_{i+1} \cdot T)).
\]
Therefore, $\uppi_i\uppi_{i+1}\uppi_i(T) 
= s_{i} \cdot (s_{i+1} \cdot (s_{i} \cdot T)) 
= s_{i+1} \cdot (s_i \cdot (s_{i+1} \cdot T)) 
= \uppi_{i+1}\uppi_i\uppi_{i+1}(T)$.
\end{proof}

Now, let us prove \cref{thm: svt action is 0hecke}.

\begin{proof}[Proof of \cref{thm: svt action is 0hecke}]
By \cref{lem: admissible swap}, the operators $\uppi_1, \uppi_2, \ldots,
\uppi_{m-1}$ in \cref{eq: svt action} are well-defined on
$\C\StdSVT_m(\lambda)$.
By \cref{lem: svt action idempotence}, \cref{lem: svt action commutation}, and \cref{lem: svt action braid}, they satisfy the relations in
\cref{eq: relations of pi}, and therefore define an $H_m(0)$-action on $\C\StdSVT_m(\lambda)$.
\end{proof}

By \cref{thm: svt action is 0hecke}, the vector space $\C\StdSVT_m(\lambda)$ is an $H_m(0)$-module.
We denote this module by $\scrG_{\lambda;m}$.

\begin{remark}
When $m=|\lambda|$, we have $\StdSVT_m(\lambda)=\SYT(\lambda)$.
In this case, the action in \cref{eq: svt action} agrees with Searles' convention \cite{Searles20}.
\end{remark}

The following theorem is the second main result of this subsection.

\begin{theorem}\label{thm: characteristic}
For $\lambda \vdash n$ and $m\ge n$, we have
\[
\ch([\scrG_{\lambda;m}])=G_{\lambda,m}.
\]
\end{theorem}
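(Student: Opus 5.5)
The plan is to exhibit a composition series of $\scrG_{\lambda;m}$ whose successive quotients are the one-dimensional modules $\bfF_{\comp(\Des(T))}$ for $T\in\StdSVT_m(\lambda)$, and then invoke \cref{thm: stable groth F expansion}. To find the filtration, I need a linear extension of a partial order on $\StdSVT_m(\lambda)$ with the following property: every $\uppi_i$ maps a tableau $T$ to either $T$, $0$, or a tableau that is strictly larger than $T$. The action in \cref{eq: svt action} produces a new tableau only when $\col_T(i)>\col_T(i+1)$. By \cref{lem: admissible swap}, in that case $\row_T(i)<\row_T(i+1)$, and $s_i\cdot T$ moves the smaller label $i$ into the lower row.

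For the statistic, I will take the reading word $w_T$, or more simply the permutation recording positions. Define $\mathrm{inv}(T)$ as the number of pairs $p<q$ such that $q$ appears before $p$ in $w_T$. Swapping $i$ and $i+1$ exchanges their positions in the word and leaves all other relative orders unchanged. Before the swap, $i+1$ lies in a strictly lower row than $i$, so it is read first. After the swap the order reverses, so $\mathrm{inv}(s_i\cdot T)=\mathrm{inv}(T)-1$.

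Since it does not matter which direction the order runs, I will order $\StdSVT_m(\lambda)$ as $T_1,\dots,T_N$ with $\mathrm{inv}$ weakly decreasing, breaking ties arbitrarily. Let $M_k$ be the span of $T_k,\dots,T_N$. Each $\uppi_i$ sends $T_k$ into $\C T_k$ or into the span of tableaux with strictly smaller $\mathrm{inv}$, and those lie in $M_{k+1}$. Hence each $M_k$ is a submodule and $M_k/M_{k+1}$ is one-dimensional, spanned by the image of $T_k$.

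On this quotient, $\uppi_i$ acts by $1$ when $\col(i)<\col(i+1)$. It acts by $0$ in the other two cases: when $\col(i)=\col(i+1)$ the image is $0$, and when $\col(i)>\col(i+1)$ the image is $s_i\cdot T_k\in M_{k+1}$. The set of $i$ where $\uppi_i$ acts by $0$ is exactly $\{i:\col_T(i)\ge\col_T(i+1)\}=\Des(T_k)$. So $M_k/M_{k+1}\cong\bfF_{\comp(\Des(T_k))}$, and
\[
\ch([\scrG_{\lambda;m}])=\sum_{T\in\StdSVT_m(\lambda)}F_{\comp(\Des(T))}=G_{\lambda,m}
\]
by \cref{thm: stable groth F expansion}.

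The main point needing care is that the swap really reverses exactly one inversion of the reading word. If I want to avoid reading-word subtleties, I can use a cruder but clearly monotone statistic instead, namely $\sum_j j\cdot\row_T(j)$. It strictly decreases under the swap, because the swap moves the smaller label into the larger row index and the larger label into the smaller one: the change is $(i+1)r+i r'-(ir+(i+1)r')=r-r'<0$, where $r<r'$ are the rows of $i$ and $i+1$ in $T$. I would use this statistic for robustness.
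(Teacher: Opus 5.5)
Your proposal is correct and follows the paper's proof: a filtration of $\scrG_{\lambda;m}$ obtained by ordering $\StdSVT_m(\lambda)$ along a statistic that is strictly monotone under the nontrivial action $T\mapsto s_i\cdot T$, with one-dimensional quotients $\bfF_{\comp(\Des(T))}$, followed by \cref{thm: stable groth F expansion}. The only difference is the choice of statistic: the paper uses the number $r(T)$ of pairs $p<q$ with $\col_T(p)<\col_T(q)$, which increases by exactly one under the swap, and either your reading-word inversion count or your row-weighted sum $\sum_j j\cdot\row_T(j)$ works equally well in its place.
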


\begin{proof}
For $T\in\StdSVT_m(\lambda)$, define
\[
r(T):=\bigl|\{(p,q)\mid 1\le p<q\le m,\ \col_T(p)<\col_T(q)\}\bigr|.
\]
Set $N:=|\StdSVT_m(\lambda)|$, and enumerate the tableaux as $T_1,T_2,\ldots,T_N$ so that
\[
r(T_1)\ge r(T_2)\ge\cdots\ge r(T_N).
\]
Set $M_0:=\{0\}$ and for $1\le j\le N$, let
\[
M_j:=\C\{T_1,T_2,\ldots,T_j\}.
\]

Let us show that each $M_j$ is an $H_m(0)$-submodule of $\scrG_{\lambda;m}$.
Fix $1 \le j \le N$.
Let $1\le k\le j$ and $1\le i\le m-1$.
If $\pi_i \cdot T_k = T_k$ or $\pi_i \cdot T_k = 0$, then $\pi_i \cdot T_k \in M_j$.
Assume that $\pi_i \cdot T_k = s_i \cdot T_k$.
Then $\col_{T_k}(i)>\col_{T_k}(i+1)$.
In the sequence $(\col_{T_k}(1),\col_{T_k}(2),\ldots,\col_{T_k}(m))$, interchanging the terms in positions $i$ and $i+1$ creates the increasing pair $(i,i+1)$.
The total number of increasing pairs involving any other position remains unchanged.
Thus $r(s_i\cdot T_k)=r(T_k)+1$.
By the ordering of the tableaux, $s_i\cdot T_k=T_\ell$ for some $\ell<k$, and so $\pi_i\cdot T_k\in M_j$.
Hence $M_j$ is an $H_m(0)$-submodule of $\scrG_{\lambda;m}$.
Since $j$ was arbitrary,
\begin{align}\label{eq: filtration of svt module}
\{0\}=M_0\subseteq M_1\subseteq\cdots\subseteq M_N=\scrG_{\lambda;m}
\end{align}
is a filtration of $\scrG_{\lambda;m}$ by $H_m(0)$-submodules.

For $1\le j\le N$, the successive quotient $M_j/M_{j-1}$ of the filtration in \cref{eq: filtration of svt module} is one-dimensional with basis element $T_j + M_{j-1}$.
By the preceding argument and \cref{eq: svt action},
\[
\pi_i\cdot (T_j+M_{j-1}) =
\begin{cases}
T_j+M_{j-1} & \text{if $\col_{T_j}(i)<\col_{T_j}(i+1)$},\\
0 & \text{if $\col_{T_j}(i)\ge\col_{T_j}(i+1)$}.
\end{cases}
\]
It follows that $M_j/M_{j-1}$ is isomorphic to $\bfF_{\comp(\Des(T_j))}$.
Therefore,
\[
\ch([\scrG_{\lambda;m}])
=\sum_{j=1}^N\ch([M_j/M_{j-1}])
=\sum_{T\in\StdSVT_m(\lambda)}F_{\comp(\Des(T))} = G_{\lambda,m},
\]
where the last equality follows from \cref{thm: stable groth F expansion}.
\end{proof}

\begin{remark}\label{rem: indecomposable module}
As in the case of genomic Schur functions \cite[Remark~3.7]{KY24}, there are pairs $(\lambda,m)$ for which no indecomposable $H_m(0)$-module has $G_{\lambda,m}$ as its quasisymmetric characteristic.
For instance, consider the case $\lambda=(1^k)$, where $1 < k < m$.
Each tableau in $\StdSVT_m((1^k))$ is uniquely obtained by 
\begin{enumerate}[label = {\rm (\roman*)}, leftmargin = 4ex]
\item 
partitioning $[m]$ into $k$ nonempty parts, each consisting of consecutive integers, and 
\item 
placing one part in each box, in increasing order from top to bottom.
\end{enumerate}
Thus there are $\binom{m-1}{k-1}$ such tableaux.
Moreover, every $i \in [m-1]$ is a descent of every tableau in $\StdSVT_m((1^k))$, and hence
\[
G_{(1^k),m}=\binom{m-1}{k-1}F_{(1^m)}.
\]
On the other hand, by \cite[Theorem~4.7]{DHT02},
\[
\operatorname{Ext}^1_{H_m(0)}(\bfF_\alpha,\bfF_\alpha)=0
\]
for every $\alpha\models m$.
It follows that, for an $H_m(0)$-module $M$, if $\ch([M])=dF_\alpha$ for some $d \in \Z_{>0}$, then $M \cong \bfF_\alpha^{\oplus d}$.
Consequently, there is no indecomposable $H_m(0)$-module $M$ such that $\ch([M]) = G_{(1^k),m}$.
\end{remark}

\subsection[A direct sum decomposition]{A direct sum decomposition of $\scrG_{\lambda;m}$}
\label{subsec: decomposition}

The purpose of this subsection is to decompose $\scrG_{\lambda;m}$ into $H_m(0)$-submodules.
Let $T \in \StdSVT_m(\lambda)$.
For $1 \le j \le \lambda_1$, define
\[
c_j(T) := \left( |T(1,j)|,|T(2,j)|,\ldots,|T(h_j,j)| \right),
\]
where $h_j$ is the number of boxes in the $j$th column of $\tyd(\lambda)$.
Thus $c_j(T)$ records the number of entries in each box of the $j$th column from top to bottom.

\begin{definition}\label{def: column equivalence}
For $T,U \in \StdSVT_m(\lambda)$, define the equivalence relation $\sim_{\lambda,m}$ by
\[
T\sim_{\lambda,m}U
\quad\text{if and only if}\quad
c_j(T)=c_j(U)\text{ for all }1 \le j \le \lambda_1.
\]
\end{definition}

If $\lambda$ and $m$ are clear from the context, we write $\sim$ for $\sim_{\lambda,m}$.
Let $\calE_{\lambda,m}$ be the set of equivalence classes of $\StdSVT_m(\lambda)$ with respect to $\sim$.

\begin{example}\label{ex: column equivalence}
Let $\lambda=(2,1)$ and $m=4$.
Consider
\[
\begingroup
\ytableausetup{boxsize=1.7em}
T_1=
\begin{ytableau}
1,2 & 3\\
4
\end{ytableau}\;,
\qquad
T_2=
\begin{ytableau}
1,2 & 4\\
3
\end{ytableau}\;,
\qquad \text{and} \qquad
T_3=
\begin{ytableau}
1 & 2,3\\
4
\end{ytableau}\;.
\endgroup
\]
For $T_1$ and $T_2$, we have
\[
c_1(T_1)=c_1(T_2)=(2,1),
\quad \text{and} \quad
c_2(T_1)=c_2(T_2)=(1).
\]
Hence $T_1 \sim T_2$.
On the other hand,
\[
c_1(T_3)=(1,1),
\quad \text{and} \quad
c_2(T_3)=(2).
\]
Thus $T_3$ is equivalent to neither $T_1$ nor $T_2$.
\end{example}

\begin{theorem}\label{thm: component direct sum}
For each $E \in \calE_{\lambda,m}$, the $\C$-span of $E$ is closed under the $H_m(0)$-action.
\end{theorem}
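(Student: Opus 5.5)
It suffices to check that for every $T\in E$ and every $i\in[m-1]$, the element $\uppi_i(T)$ is either $0$ or a scalar multiple of a tableau lying in $E$. Since $E$ is a basis of its $\C$-span and each $\uppi_i$ is linear, this gives closure of $\C E$ under every generator, and hence under all of $H_m(0)$. By \cref{eq: svt action}, the cases $\uppi_i(T)=T$ and $\uppi_i(T)=0$ are immediate. So the only case to handle is $\col_T(i)>\col_T(i+1)$, where $\uppi_i(T)=s_i\cdot T$. This lies in $\StdSVT_m(\lambda)$ by \cref{lem: admissible swap}, and it remains to show $s_i\cdot T\sim T$.

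For this, I will show that interchanging $i$ and $i+1$ does not change the number of entries in any box. Let $B$ and $B'$ be the boxes containing $i$ and $i+1$ in $T$. Since $\col_T(i)\neq\col_T(i+1)$, these boxes are distinct. The filling $s_i\cdot T$ is obtained by removing $i$ from $B$ and inserting it into $B'$, and simultaneously removing $i+1$ from $B'$ and inserting it into $B$. Thus each of $B$ and $B'$ loses exactly one entry and gains exactly one entry, and every other box is unchanged. Hence $|(s_i\cdot T)(r,j)|=|T(r,j)|$ for every box $(r,j)\in\tyd(\lambda)$. In particular $c_j(s_i\cdot T)=c_j(T)$ for all $1\le j\le\lambda_1$, so $s_i\cdot T\sim T$ by \cref{def: column equivalence}, and $s_i\cdot T\in E$.

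There is no real obstacle here; the only point requiring care is that $B\ne B'$, which is guaranteed by the strict column inequality in the swap case. In fact the argument shows something stronger: the action preserves the full box-size data, and the column vectors $c_j$ are just a repackaging of that data. As a consequence, $\scrG_{\lambda;m}=\bigoplus_{E\in\calE_{\lambda,m}}\C E$ as $H_m(0)$-modules, because the classes partition the basis $\StdSVT_m(\lambda)$.
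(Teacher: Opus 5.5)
Your proof is correct and matches the paper's argument. The only nontrivial case is $\uppi_i(T)=s_i\cdot T$, and interchanging $i$ and $i+1$ leaves the number of entries in every box unchanged, so each $c_j$ is preserved and $s_i\cdot T\in E$. One minor point: you do not actually need $B\neq B'$, since a swap inside a single box would also leave all box sizes unchanged.
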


\begin{proof}
Let $E \in \calE_{\lambda,m}$, $T \in E$, and $1 \le i \le m-1$.
In the cases where $\pi_i \cdot T=0$ or $\pi_i \cdot T=T$, it is clear that $\pi_i \cdot T$ is in the $\C$-span of $E$.
Suppose that $\pi_i \cdot T=s_i\cdot T$.
Then for any $1 \le j \le \lambda_1$,
\[
c_j(s_i \cdot T)=c_j(T)
\]
because interchanging $i$ and $i+1$ does not change the number of entries in each box.
It follows that $s_i \cdot T \sim T$, so $\pi_i \cdot T$ is in $E$.
Therefore the $\C$-span of $E$ is closed under the $H_m(0)$-action.
\end{proof}

For each $E \in \calE_{\lambda,m}$, we denote by $\scrG_E$ the $H_m(0)$-submodule of $\scrG_{\lambda;m}$ whose underlying space is the $\C$-span of $E$.
Since the equivalence classes in $\calE_{\lambda,m}$ form a partition of $\StdSVT_m(\lambda)$, \cref{thm: component direct sum} gives the following direct sum decomposition of $\scrG_{\lambda;m}$ into $H_m(0)$-submodules:
\[
\scrG_{\lambda;m}=\bigoplus_{E \in \calE_{\lambda,m}}\scrG_E.
\]

\begin{example}\label{ex: svt action 214}
We illustrate the $\pi_i$-actions on $\StdSVT_4((2,1))$ in \cref{fig: svt action 214}.
In this figure, the zero actions are omitted, and each column represents an equivalence class under $\sim_{(2,1),4}$.

\begin{figure}[ht]
\centering
\begingroup
\ytableausetup{boxsize=1.6em}
\begin{tikzpicture}[>=stealth,scale=0.76,transform shape,
  tableau/.style={inner sep=-2pt},
  edge/.style={->,shorten >=4pt,shorten <=4pt},
  loopedge/.style={->,out=45,in=315,loop,looseness=1,min distance=10mm}]
\node[tableau] (T1) at (0,0) {
\begin{ytableau}
1,2 & 3\\
4
\end{ytableau}};
\node[tableau] (T2) at (0,-2.2) {
\begin{ytableau}
1,2 & 4\\
3
\end{ytableau}};

\node[tableau] (T3) at (6.9,0) {
\begin{ytableau}
1 & 2,3\\
4
\end{ytableau}};
\node[tableau] (T4) at (6.9,-2.2) {
\begin{ytableau}
1 & 2,4\\
3
\end{ytableau}};
\node[tableau] (T6) at (6.9,-4.4) {
\begin{ytableau}
1 & 3,4\\
2
\end{ytableau}};

\node[tableau] (T5) at (13.8,0) {
\begin{ytableau}
1 & 2\\
3,4
\end{ytableau}};
\node[tableau] (T7) at (13.8,-2.2) {
\begin{ytableau}
1 & 3\\
2,4
\end{ytableau}};
\node[tableau] (T8) at (13.8,-4.4) {
\begin{ytableau}
1 & 4\\
2,3
\end{ytableau}};

\node at (3.45,-1.1) { $\bigoplus$};
\node at (10.35,-1.1) {$\bigoplus$};

\draw[edge] (T1) -- node[right] {$\pi_3$} (T2);
\draw[edge] (T3) -- node[right] {$\pi_3$} (T4);
\draw[edge] (T4) -- node[right] {$\pi_2$} (T6);
\draw[edge] (T5) -- node[right] {$\pi_2$} (T7);
\draw[edge] (T7) -- node[right] {$\pi_3$} (T8);

\node (L1) at ([xshift=0.175cm]T1.east) {};
\path (L1) edge [->,out=40,in=320,loop,min distance=10mm] (L1);
\node[anchor=west] at ([xshift=1.15cm]T1.east) {$\pi_2$};
\node (L2) at ([xshift=0.175cm]T2.east) {};
\path (L2) edge [->,out=40,in=320,loop,min distance=10mm] (L2);
\node[anchor=west] at ([xshift=1.15cm]T2.east) {$\pi_3$};
\node (L3) at ([xshift=0.175cm]T3.east) {};
\path (L3) edge [->,out=40,in=320,loop,min distance=10mm] (L3);
\node[anchor=west] at ([xshift=1.15cm]T3.east) {$\pi_1$};
\node (L4) at ([xshift=0.175cm]T4.east) {};
\path (L4) edge [->,out=40,in=320,loop,min distance=10mm] (L4);
\node[anchor=west] at ([xshift=1.15cm]T4.east) {$\pi_1,\pi_3$};
\node (L6) at ([xshift=0.175cm]T6.east) {};
\path (L6) edge [->,out=40,in=320,loop,min distance=10mm] (L6);
\node[anchor=west] at ([xshift=1.15cm]T6.east) {$\pi_2$};
\node (L5) at ([xshift=0.175cm]T5.east) {};
\path (L5) edge [->,out=40,in=320,loop,min distance=10mm] (L5);
\node[anchor=west] at ([xshift=1.15cm]T5.east) {$\pi_1$};
\node (L7) at ([xshift=0.175cm]T7.east) {};
\path (L7) edge [->,out=40,in=320,loop,min distance=10mm] (L7);
\node[anchor=west] at ([xshift=1.15cm]T7.east) {$\pi_2$};
\node (L8) at ([xshift=0.175cm]T8.east) {};
\path (L8) edge [->,out=40,in=320,loop,min distance=10mm] (L8);
\node[anchor=west] at ([xshift=1.15cm]T8.east) {$\pi_3$};
\end{tikzpicture}
\endgroup
\caption{The $\pi_i$-actions on $\StdSVT_4((2,1))$.}
\label{fig: svt action 214}
\end{figure}
\end{example}

\section[Interval structure on each equivalence class]{Interval structure on each class in $\calE_{\lambda,m}$}
\label{sec: WBIM decomposition}

Throughout this section, fix $E \in \calE_{\lambda,m}$.
We will show that $\scrG_E$ is isomorphic to a weak Bruhat interval module.
Following the general strategy of \cite[Sections~4 and 5]{KY24}, we first find the unique source and sink tableaux in $E$ and then identify the tableaux in $E$ with the permutations in a left weak Bruhat interval.

\subsection[Source and sink tableaux]{Source and sink tableaux in $E$}
\label{subsec: source and sink}

\begin{definition}\label{def: source sink}
Let $T \in E$.
\begin{enumerate}[label = {\rm (\arabic*)}]
\item 
We call $T$ a \emph{source tableau} if there do not exist $T' \in E$ and $1 \le i \le m-1$ such that $T' \ne T$ and $\pi_i \cdot T'=T$.
\item
We call $T$ a \emph{sink tableau} if there do not exist $T' \in E$ and $1 \le i \le m-1$ such that $T' \ne T$ and $\pi_i \cdot T=T'$.
\end{enumerate}
\end{definition}

To prove that $E$ has a unique source tableau and a unique sink tableau,
we define a poset associated with $E$.
For each box $B \in \tyd(\lambda)$, define
\[
d_B:=|T(B)|,
\]
where $T$ is any tableau in $E$.
This is independent of the choice of $T$ by \cref{def: column equivalence}.
For each box $B$ and $1 \le p \le d_B$, let $B^{(p)}$ denote the slot corresponding to the $p$th smallest entry of $T(B)$ for every $T \in E$. 
Let $\mathsf{S}_E$ be the set of all these slots.
Define $P_E=(\mathsf{S}_E,\preceq_{P_E})$, where $\preceq_{P_E}$ is the partial order given by the following conditions.
\begin{enumerate}[label = {\rm (\arabic*)}]
\item For each box $B$, we have
\[
B^{(1)}\prec_{P_E} B^{(2)}\prec_{P_E}\cdots\prec_{P_E} B^{(d_B)}.
\]
\item If $B_1=(r_1,c_1)$ and $B_2=(r_2,c_2)$ are distinct boxes satisfying $r_1\le r_2$ and $c_1\le c_2$, then
\[
B_1^{(p)}\prec_{P_E} B_2^{(q)}
\]
for all $1 \le p \le d_{B_1}$ and $1 \le q \le d_{B_2}$.
\end{enumerate}

We next construct a bijection between $E$ and the set of linear extensions of $P_E$.
For $T \in E$ and $i \in [m]$, let $\sfx_i(T)$ denote the unique slot corresponding to the entry $i$ in $T$.
The row and column inequalities for set-valued tableaux imply that the total order $L_T=(\mathsf{S}_E,\preceq_{L_T})$ given by
\[
\sfx_1(T)\prec_{L_T}\sfx_2(T)\prec_{L_T}\cdots\prec_{L_T}\sfx_m(T)
\]
is a linear extension of $P_E$.
Conversely, let $L=(\mathsf{S}_E,\preceq_L)$ be a linear extension of $P_E$.
List the slots of $\mathsf{S}_E$ as $y_1,y_2,\ldots,y_m$ so that
\[
y_1\prec_L y_2\prec_L\cdots\prec_L y_m.
\]
By the definition of $\preceq_{P_E}$, assigning the integer $i$ to the slot $y_i$ for each $i \in [m]$ gives a unique standard set-valued tableau in $E$.
The two constructions are inverse to each other.
Therefore the map from $E$ to the set of linear extensions of $P_E$ defined by $T\mapsto L_T$ is a bijection.

\begin{example}\label{ex: slot poset}
Let $\lambda=(2,1)$, $m=4$, and let $E$ be the equivalence class containing
\[
\begingroup
\ytableausetup{boxsize=1.7em}
T_1=
\begin{ytableau}
1,2 & 3\\
4
\end{ytableau}
\qquad\text{and}\qquad
T_2=
\begin{ytableau}
1,2 & 4\\
3
\end{ytableau}.
\endgroup
\]
Write $B_1=(1,1)$, $B_2=(1,2)$, and $B_3=(2,1)$.
Then $d_{B_1}=2$ and $d_{B_2}=d_{B_3}=1$, so
\[
\mathsf{S}_E=\{B_1^{(1)},B_1^{(2)},B_2^{(1)},B_3^{(1)}\}.
\]
The only pair of incomparable elements in $P_E$ is $\{B_2^{(1)},B_3^{(1)}\}$.
Thus $P_E$ has exactly two linear extensions, $L_{T_1}$ and $L_{T_2}$, given by
\[
\begin{aligned}
B_1^{(1)}
&\prec_{L_{T_1}}B_1^{(2)}
\prec_{L_{T_1}}B_2^{(1)}
\prec_{L_{T_1}}B_3^{(1)}
\quad\text{and}\quad
B_1^{(1)}
&\prec_{L_{T_2}}B_1^{(2)}
\prec_{L_{T_2}}B_3^{(1)}
\prec_{L_{T_2}}B_2^{(1)}.
\end{aligned}
\]
\end{example}

For $B=(r,c)$ and $1 \le p \le d_B$, define $\col(B^{(p)}):=c$.
The following lemma characterizes source and sink tableaux in terms of the column indices of consecutive slots in $L_T$.

\begin{lemma}\label{lem: source sink characterization}
Let $T \in E$.
Then the following hold.
\begin{enumerate}[label = {\rm (\arabic*)}]
\item The tableau $T$ is a source tableau if and only if
\[
\col(\sfx_i(T))>\col(\sfx_{i+1}(T))
\]
for all $1 \le i \le m-1$ such that $\sfx_i(T)$ and $\sfx_{i+1}(T)$ are incomparable in $P_E$.
\item The tableau $T$ is a sink tableau if and only if
\[
\col(\sfx_i(T))\le\col(\sfx_{i+1}(T))
\]
for all $1 \le i \le m-1$.
\end{enumerate}
\end{lemma}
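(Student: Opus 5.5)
The plan is to translate the definitions of source and sink tableaux into statements about adjacent transpositions of $L_T$, using the bijection $T\mapsto L_T$. First observe that $\sfx_i(T)$ and $\sfx_{i+1}(T)$ are comparable in $P_E$ exactly when they lie in the same box, or in distinct boxes $B_1=(r_1,c_1)$, $B_2=(r_2,c_2)$ with $r_1\le r_2$ and $c_1\le c_2$. Since $L_T$ is a linear extension, comparability forces $\sfx_i(T)\prec_{P_E}\sfx_{i+1}(T)$, and hence $\col(\sfx_i(T))\le\col(\sfx_{i+1}(T))$. If the two slots are incomparable, they lie in distinct boxes with, say, $r_1<r_2$ and $c_1>c_2$, or the reverse; in either case their columns differ. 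Moreover, interchanging $i$ and $i+1$ in $T$ yields the tableau corresponding to the linear extension in which these two slots are swapped, and that order is again a linear extension. So $s_i\cdot T\in E$ precisely when the two slots are incomparable. I also note that $\col(\sfx_i(T))=\col_T(i)$.

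For (2): $T$ fails to be a sink exactly when $\pi_i\cdot T=T'\ne T$ for some $i$ with $T'\in E$. By \cref{eq: svt action}, this happens iff $\pi_i\cdot T=s_i\cdot T$, that is, iff $\col_T(i)>\col_T(i+1)$. The resulting $s_i\cdot T$ lies in $E$ by \cref{lem: admissible swap} and \cref{thm: component direct sum}, and it differs from $T$. Therefore $T$ is a sink iff $\col_T(i)\le\col_T(i+1)$ for all $i$, which is the claimed condition.

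For (1): $T$ fails to be a source exactly when there exist $T'\in E$, $T'\ne T$, and $i$ with $\pi_i\cdot T'=T$. The outcome $\pi_i\cdot T'=T'$ would force $T'=T$, so the only way this can happen is $\pi_i\cdot T'=s_i\cdot T'=T$ with $\col_{T'}(i)>\col_{T'}(i+1)$. This is equivalent to $T'=s_i\cdot T$ with $\col_T(i)<\col_T(i+1)$, where $s_i\cdot T$ must itself be a standard set-valued tableau in $E$, i.e. the slots $\sfx_i(T),\sfx_{i+1}(T)$ must be incomparable.

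Conversely, suppose the slots are incomparable and $\col_T(i)<\col_T(i+1)$. Then $T':=s_i\cdot T\in E$ satisfies $\col_{T'}(i)>\col_{T'}(i+1)$, so $\pi_i\cdot T'=T$ and $T$ is not a source. Hence $T$ is a source iff for every $i$ with incomparable slots we have $\col_T(i)\ge\col_T(i+1)$. Since incomparable slots have distinct columns, this is the strict inequality stated.

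The main care point is the claim that $s_i\cdot T$ is a valid tableau in $E$ exactly when the two slots are incomparable. This must be phrased via the bijection with linear extensions of $P_E$, rather than via \cref{lem: admissible swap}, which only covers one direction.
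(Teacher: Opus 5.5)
Your proof is correct and follows essentially the same route as the paper: (2) is read off directly from the definition of $\pi_i$, and (1) uses the bijection $T\mapsto L_T$ to see that swapping $i$ and $i+1$ within $E$ is possible exactly for adjacent incomparable slots, which necessarily lie in different columns. One small slip is that your blanket claim that $s_i\cdot T\in E$ exactly when $\sfx_i(T)$ and $\sfx_{i+1}(T)$ are incomparable fails when $i$ and $i+1$ share a box, since then $s_i\cdot T=T$. You only invoke that claim when $\col_T(i)\ne\col_T(i+1)$, where it does hold by the opposite-orders-in-linear-extensions argument, so nothing breaks.
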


\begin{proof}
\noindent{\rm (1)}
To prove the ``only if'' direction, suppose that $T$ is a source tableau.
Fix $1 \le i \le m-1$ such that $\sfx_i(T)$ and $\sfx_{i+1}(T)$ are incomparable.
Since $\sfx_i(T)$ and $\sfx_{i+1}(T)$ are incomparable, interchanging them in $L_T$ gives another linear extension $L'$ of $P_E$.
Let $T' \in E$ be the tableau corresponding to $L'$.
Since slots in the same column are comparable in $P_E$, we have $\col(\sfx_i(T)) \neq \col(\sfx_{i+1}(T))$.
If $\col(\sfx_i(T)) < \col(\sfx_{i+1}(T))$, then $\col(\sfx_i(T')) > \col(\sfx_{i+1}(T'))$, and hence $\pi_i \cdot T'=T$.
This contradicts the assumption that $T$ is a source tableau.
Therefore $\col(\sfx_i(T))>\col(\sfx_{i+1}(T))$.

We prove the ``if'' direction by contraposition.
Suppose that $T$ is not a source tableau.
By \cref{def: source sink}(1), there exist $T' \in E$ and $1 \le i \le m-1$ such that $T'\ne T$ and $\pi_i \cdot T'=T$.
The equality $\pi_i \cdot T'=T$, together with $T'\ne T$, implies that $T'=s_i \cdot T$, so the slots $\sfx_i(T)$ and $\sfx_{i+1}(T)$ occur in opposite orders in $L_T$ and $L_{T'}$.
Since $L_T$ and $L_{T'}$ are linear extensions of $P_E$, $\sfx_i(T)$ and $\sfx_{i+1}(T)$ are incomparable in $P_E$.
Moreover, since $\pi_i \cdot T'=T$ with $T'\ne T$, we have $\col(\sfx_{i+1}(T')) < \col(\sfx_i(T'))$.
Combining this inequality with the equality $T' = s_i \cdot T$ yields
\[
\col(\sfx_i(T)) = \col(\sfx_{i+1}(T')) < \col(\sfx_i(T')) = \col(\sfx_{i+1}(T)).
\]
This proves the ``if'' direction by contraposition.
\medskip

\noindent{\rm (2)}
To prove the ``only if'' direction, suppose that $T$ is a sink tableau.
Fix $1 \le i \le m-1$.
If $\col(\sfx_i(T))>\col(\sfx_{i+1}(T))$, then $T':=s_i \cdot T$ is another tableau in $E$, and $\pi_i \cdot T=T'$.
This contradicts the assumption that $T$ is a sink tableau.
Therefore $\col(\sfx_i(T))\le\col(\sfx_{i+1}(T))$.

We prove the ``if'' direction by contraposition.
Suppose that $T$ is not a sink tableau.
By \cref{def: source sink}(2), there exist $T' \in E$ and $1 \le i \le m-1$ such that $T'\ne T$ and $\pi_i \cdot T=T'$.
The equality $\pi_i \cdot T=T'$, together with $T'\ne T$, implies that
\[
\col(\sfx_i(T))>\col(\sfx_{i+1}(T)).
\]
This proves the ``if'' direction by contraposition.
\end{proof}

We next define two special tableaux in $E$ by the following algorithm.
\begin{algorithm}\label{alg: extremal tableaux}
Let $E \in \calE_{\lambda,m}$.
\begin{enumerate}[label = {\it Step {\rm \arabic*:}}]
\item 
Set $i = 1$ and $U_0 = V_0 = \emptyset$.
\item
Among the minimal elements of $P_E \setminus U_{i-1}$, let $u_i$ be the slot with the largest column index.
Among the minimal elements of $P_E \setminus V_{i-1}$, let $v_i$ be the slot with the smallest column index.
\item
If $i < m$, set $U_{i} = U_{i-1} \cup \{u_i\}$, $V_{i} = V_{i-1} \cup \{v_i\}$, $i = i+1$, and go to {\it Step 2}.
Otherwise, define $T_E$ and $T'_E$ to be the tableaux in $E$ corresponding to the total orders on $\mathsf{S}_E$ given by
\[
u_1 \prec u_2 \prec \cdots \prec u_m
\quad\text{and}\quad
v_1 \prec' v_2 \prec' \cdots \prec' v_m,
\]
respectively.
\end{enumerate}
\end{algorithm}

The tableaux $T_E$ and $T'_E$ are well-defined.
Indeed, the slots in each column are linearly ordered, so there is at most one minimal remaining slot in each column at every step.
Thus the slot chosen at each step is unique, and repeatedly choosing a minimal remaining slot gives a linear extension of $P_E$.

\begin{example}\label{ex: extremal tableaux}
Let $E$ be the equivalence class in \cref{ex: slot poset}, and let $T_1,T_2 \in E$ be the tableaux given there.
Applying \cref{alg: extremal tableaux}, we see that the first two terms of both sequences are $B_1^{(1)}$ and $B_1^{(2)}$.
After these slots are chosen, the remaining minimal slots are $B_2^{(1)}$ and $B_3^{(1)}$, whose column indices are $2$ and $1$, respectively.
Therefore
\[
(u_1,u_2,u_3,u_4)
=\bigl(B_1^{(1)},B_1^{(2)},B_2^{(1)},B_3^{(1)}\bigr)
\quad\text{and}\quad
(v_1,v_2,v_3,v_4)
=\bigl(B_1^{(1)},B_1^{(2)},B_3^{(1)},B_2^{(1)}\bigr).
\]
Hence $T_E=T_1$ and $T'_E=T_2$.
\end{example}

\begin{theorem}\label{thm: unique source sink}
The tableau $T_E$ is the unique source tableau in $E$, and $T'_E$ is the unique sink tableau in $E$.
\end{theorem}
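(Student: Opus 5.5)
The plan is to use the bijection $T\mapsto L_T$ between $E$ and the linear extensions of $P_E$, together with the characterizations in \cref{lem: source sink characterization}. Both halves of the theorem have the same shape. First we show that $T_E$ (resp.\ $T'_E$) satisfies the criterion of \cref{lem: source sink characterization}(1) (resp.\ (2)). Then we show that any linear extension satisfying that criterion must coincide with the greedy one produced by \cref{alg: extremal tableaux}.

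\textbf{Sink.} The key observation concerns the minimal elements of $P_E\setminus V$ for any down-set $V$. Each column contributes at most one such slot, namely its lowest remaining slot. Each such slot has all of its $P_E$-predecessors already in $V$.

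\emph{$T'_E$ is a sink.} Suppose $\col(v_{i+1})<\col(v_i)$. Then $v_{i+1}$ is not above $v_i$ in $P_E$, since every $P_E$-successor of a slot lies in a weakly larger column. Hence $v_{i+1}$ was already minimal in $P_E\setminus V_{i-1}$. Its column index is smaller than that of $v_i$, which contradicts the choice of $v_i$.

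\emph{Uniqueness of the sink.} Let $T$ be a sink, so column indices are weakly increasing along $L_T$. By induction on $i$, assume $\sfx_j(T)=v_j$ for all $j<i$. The slot $v_i$ is minimal in the remainder, so it appears at some position $k\ge i$ in $L_T$. Every slot placed at positions $i,\dots,k$ has column index at most $\col(v_i)$. Among the remaining slots, the minimal one in a column $c<\col(v_i)$ would be minimal in $P_E\setminus V_{i-1}$ (using the structure of $P_E$ restricted to columns $\le c$), and that contradicts minimality of $\col(v_i)$. So no slot in a column smaller than $\col(v_i)$ is available. The slot at position $i$ therefore lies in column $\col(v_i)$, and since slots in one column form a chain, it equals $v_i$.

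I expect the cleanest route is to argue directly: the first remaining element of $L_T$ is minimal in $P_E\setminus V_{i-1}$. If its column exceeded $\col(v_i)$, then $v_i$ would come later at a smaller column, violating monotonicity. Hence it equals $v_i$.

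\textbf{Source.} The argument is symmetric, but the criterion only constrains incomparable consecutive pairs.

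\emph{$T_E$ is a source.} Suppose $u_i,u_{i+1}$ are incomparable. Then $u_{i+1}$ was already minimal in $P_E\setminus U_{i-1}$, so $\col(u_{i+1})\le\col(u_i)$. Equality is impossible, because slots in the same column are comparable. Hence the inequality is strict.

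\emph{Uniqueness of the source.} Let $T$ satisfy criterion (1), and assume $\sfx_j(T)=u_j$ for $j<i$. Let $y=\sfx_i(T)$, which is minimal in $P_E\setminus U_{i-1}$, and suppose $y\ne u_i$, so $\col(y)<\col(u_i)$. Then $u_i$ appears later, at position $k>i$. Consider the consecutive pairs between positions $i$ and $k$. Along the stretch where consecutive elements are incomparable, column indices strictly decrease. Where they are comparable, the later element is $P_E$-above the earlier one.

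The main obstacle is showing that we eventually reach $u_i$ with a column index contradiction. The plan is to take the \emph{first} position $j\in[i+1,k]$ whose slot $z$ satisfies $z\not\succ_{P_E}$ (all earlier slots at positions $\ge i$). Since $u_i$ is above none of those slots (it is minimal in the remainder), such a $j$ exists, and $j\le k$. The slot $z'$ at position $j-1$ is then incomparable to $z$, so $\col(z)<\col(z')$. We then want to derive a contradiction from the fact that $z'$ is above $y$ or equal to it, so $\col(z')$ is small relative to where minimal slots sit.

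I expect this comparability chase to need care. If it fails, the fallback is a counting argument instead. Each $T$ determines a maximal chain of $\pi_i$-moves to a sink, since $r(T)$ strictly increases along such moves. Uniqueness of the sink then gives that every $T$ reaches $T'_E$. One would then dualize, showing that every $T\ne T_E$ admits a preimage $s_i\cdot T$ obtained from its first disagreement with $L_{T_E}$ via an adjacent incomparable swap.
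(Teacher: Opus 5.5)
Your sink half and your proof that $T_E$ is a source are correct and follow the paper; your sink-uniqueness step is the paper's argument, phrased as a direct failure of monotonicity. The gap is the uniqueness of the source, which you leave open.

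\emph{Why the first-break plan does not close.} Stopping at the first break of the chain above $y=\sfx_i(T)$ does not produce a violation of \cref{lem: source sink characterization}(1). Take $\lambda=(3,2,1)$, $m=6$, and the standard tableau $T$ with rows $1\,2\,6$, $3\,4$, $5$. Then $L_T$ lists the boxes $(1,1),(1,2),(2,1),(2,2),(3,1),(1,3)$, whereas $L_{T_E}$ lists $(1,1),(1,2),(1,3),(2,1),(2,2),(3,1)$. So $i=3$, $y=(2,1)$ and $u_3=(1,3)$. The chain $y\prec_{P_E}(2,2)$ breaks at $z'=(2,2)$, $z=(3,1)$. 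This is an incomparable pair with $\col(z)<\col(z')$, exactly what (1) allows. The only violating pair is $(3,1),(1,3)$ at positions $5,6$. If you instead meant ``$z$ is above none of the earlier slots'', then $z$ is minimal in $P_E\setminus U_{i-1}$, and you do get a contradiction when $z=u_i$. However, your argument gives no reason why a third minimal slot $z\neq u_i$ cannot occur.

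\emph{Why the fallback does not help.} It is circular: ``every $T\neq T_E$ admits a preimage'' is precisely the statement that $T$ is not a source. Nor can the sink argument be dualized, because criteria (1) and (2) are not symmetric: comparable consecutive slots always have weakly increasing columns. The example also shows that the required swap need not sit at the first disagreement with $L_{T_E}$.

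\emph{The missing idea.} Track where the column index crosses $\col(u_i)$, not where the chain first breaks. Let $k>i$ be the position of $u_i$ in $L_T$. Every $\sfx_j(T)$ with $i\le j<k$ is incomparable with $u_i$: it precedes $u_i$ in $L_T$, and $u_i$ is minimal in $P_E\setminus U_{i-1}$. Since $\col(\sfx_i(T))<\col(u_i)=\col(\sfx_k(T))$, there is $j\in[i,k-1]$ with
$$\col(\sfx_j(T))<\col(u_i)\le\col(\sfx_{j+1}(T)).$$
If $j+1=k$, then $\sfx_j(T)$ and $\sfx_{j+1}(T)=u_i$ are incomparable with increasing columns. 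Otherwise, incomparability with $u_i$ forces $\sfx_j(T)$ into a strictly larger row index and strictly smaller column than $u_i$. It likewise forces $\sfx_{j+1}(T)$ into a strictly smaller row index and strictly larger column. Hence these two slots are again incomparable with increasing columns, and (1) fails in either case; this is the paper's argument.

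\emph{A shortcut.} You could instead check that \cref{alg: extremal tableaux} produces the row-reading filling. Then $P_E\setminus U_{i-1}$ has only the two minimal slots $y$ and $u_i$, and no unplaced slot lies in a strictly higher row and strictly larger column than $u_i$. Consequently the slot immediately preceding $u_i$ in $L_T$ already violates (1). This would also rescue your ``above none'' variant, but the row-reading fact itself has to be proved.
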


\begin{proof}
We first prove that $T_E$ is a source tableau.
Let $u_1,u_2,\ldots,u_m$ be the slots defined in \cref{alg: extremal tableaux}, and let $1 \le i \le m-1$ be an index such that $u_i$ and $u_{i+1}$ are incomparable in $P_E$.
Since $u_i$ and $u_{i+1}$ are incomparable, the minimality of $u_{i+1}$ in $P_E \setminus U_i$ implies that $u_{i+1}$ is minimal in $P_E \setminus U_{i-1}$.
In \textit{Step} 2 of \cref{alg: extremal tableaux}, $u_i$ is chosen to have the largest column index among the minimal slots of $P_E \setminus U_{i-1}$, and thus $\col(u_i)>\col(u_{i+1})$.
By \cref{lem: source sink characterization}(1), the tableau $T_E$ is a source tableau.

We next prove the uniqueness of the source tableau.
Let $T \in E$ with $T\ne T_E$, and let $k$ be the smallest index such that $\sfx_k(T)\ne u_k$.
Then $u_k=\sfx_l(T)$ for some $l>k$.
Since $\sfx_j(T)=u_j$ for all $1 \le j<k$, both $u_k$ and $\sfx_k(T)$ are minimal in $P_E \setminus U_{k-1}$.
By the choice of $u_k$, we have
\begin{align}\label{eq: col inequality}
\col(\sfx_k(T)) < \col(u_k).
\end{align}
For $k\le j<l$, the minimality of $u_k$ in $P_E \setminus U_{k-1}$ implies that $\sfx_j(T) \nprec_{P_E} u_k$, while the fact that $\sfx_j(T)$ occurs before $u_k$ in $L_T$ implies that $u_k \nprec_{P_E} \sfx_j(T)$.
It follows that $u_k$ and $\sfx_j(T)$ are incomparable in $P_E$ for all $k\le j<l$.
By \cref{eq: col inequality} and the equality $\sfx_l(T)=u_k$, we can choose an index $j$ such that $k\le j<l$ and
\[
\col(\sfx_j(T))<\col(u_k)\le\col(\sfx_{j+1}(T)).
\]
If $j+1=l$, then $\sfx_j(T)$ and $\sfx_{j+1}(T)=u_k$ are incomparable by the preceding observation.
In this case, by the choice of $j$ and \cref{lem: source sink characterization}(1), $T$ is not a source tableau.
Suppose that $j+1<l$.
Since $\sfx_{j+1}(T)$ and $u_k$ are incomparable, they do not lie in the same column, and hence $\col(u_k)<\col(\sfx_{j+1}(T))$.
Combining the fact that $\sfx_j(T)$ and $\sfx_{j+1}(T)$ are both incomparable with $u_k$ in $P_E$ with the inequalities $\col(\sfx_j(T))<\col(u_k)<\col(\sfx_{j+1}(T))$, we see that $\sfx_j(T)$ is strictly below and strictly to the left of $u_k$, whereas $\sfx_{j+1}(T)$ is strictly above and strictly to the right of $u_k$.
Therefore $\sfx_j(T)$ and $\sfx_{j+1}(T)$ are incomparable.
By the choice of $j$ and \cref{lem: source sink characterization}(1), $T$ is not a source tableau.
Thus $T_E$ is the unique source tableau in $E$.

Let us prove that $T'_E$ is a sink tableau.
Let $v_1,v_2,\ldots,v_m$ be the slots defined in \cref{alg: extremal tableaux}, and let $1 \le i \le m-1$.
If $v_i$ and $v_{i+1}$ are incomparable in $P_E$, then the minimality of $v_{i+1}$ in $P_E \setminus V_i$ implies that $v_{i+1}$ is minimal in $P_E \setminus V_{i-1}$.
In \textit{Step} 2 of \cref{alg: extremal tableaux}, $v_i$ is chosen to have the smallest column index among the minimal slots of $P_E \setminus V_{i-1}$, and thus $\col(v_i)<\col(v_{i+1})$.
If $v_i$ and $v_{i+1}$ are comparable, then the minimality of $v_i$ in $P_E \setminus V_{i-1}$ implies that $v_i \prec_{P_E} v_{i+1}$.
It follows that $\col(v_i)\le\col(v_{i+1})$.
By \cref{lem: source sink characterization}(2), the tableau $T'_E$ is a sink tableau.

We finally prove the uniqueness of the sink tableau.
Let $T \in E$ with $T\ne T'_E$, and let $k$ be the smallest index such that $\sfx_k(T)\ne v_k$.
Then $v_k=\sfx_l(T)$ for some $l>k$.
Since $\sfx_j(T)=v_j$ for all $1 \le j<k$, both $v_k$ and $\sfx_k(T)$ are minimal in $P_E \setminus V_{k-1}$.
The choice of $v_k$ gives $\col(\sfx_k(T))>\col(v_k)$.
By this inequality and the equality $\sfx_l(T)=v_k$, we can choose an index $j$ such that $k\le j<l$ and $\col(\sfx_j(T))>\col(v_k)\ge\col(\sfx_{j+1}(T))$.
It follows from \cref{lem: source sink characterization}(2) that $T$ is not a sink tableau.
Thus $T'_E$ is the unique sink tableau in $E$.
\end{proof}

\subsection[A weak Bruhat interval module structure]{A weak Bruhat interval module structure on $\scrG_E$}

To begin with, we assign a permutation to each $T \in E$.

\begin{definition}\label{def: column reading}
For $T \in E$, $\sfread(T)$ is defined to be the word obtained from $T$ by reading the entries from top to bottom starting with the rightmost column and in increasing order within each box.
\end{definition}

We identify $\sfread(T)$ with the permutation in $\SG_m$ written in one-line notation.

\begin{example}\label{ex: reading word example}
Let
\[\ytableausetup{boxsize=2em}
T=
\begin{ytableau}
\scriptstyle 1,2 & \scriptstyle 3 & \scriptstyle 4,5,6 & \scriptstyle 7,9,11\\
\scriptstyle 8,10
\end{ytableau}
\in \StdSVT_{11}((4,1))
\]
be the tableau in \cref{ex: descent reading example}.
Reading the fourth, third, second, and first columns of $T$ gives the words $7\,9\,11$, $4\,5\,6$, $3$, and $1\,2\,8\,10$, respectively.
Thus,
\[
\sfread(T)=7\,9\,11\,4\,5\,6\,3\,1\,2\,8\,10 \in \SG_{11}.
\]
\end{example}

\begin{remark}
For $T \in E$, the reading word $w_T$ used for the $F$-expansion is different from $\sfread(T)$ in general.
For example, let $T$ be as in \cref{ex: reading word example}.
Then
\[
w_T=10\,8\,11\,9\,6\,5\,2\,1\,3\,4\,7 \quad \text{and} \quad
\sfread(T)=7\,9\,11\,4\,5\,6\,3\,1\,2\,8\,10.
\]
The words $w_T$ and $\sfread(T)$ are computed in \cref{ex: descent reading example} and \cref{ex: reading word example}, respectively.
\end{remark}

The following lemma shows that the reading defined in \cref{def: column reading} is compatible with descent sets.

\begin{lemma}\label{lem: column reading properties}
For $T \in E$, $\Des(T) = [m-1] \setminus \DesL(\sfread(T))$.
\end{lemma}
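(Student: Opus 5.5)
We translate left descents of $\sigma:=\sfread(T)$ into positions in the word $\sfread(T)$. For a permutation $\sigma$ in one-line notation, $i\in\DesL(\sigma)$ if and only if $\ell(s_i\sigma)<\ell(\sigma)$. Since $s_i\sigma$ is obtained by swapping the values $i$ and $i+1$ in the one-line notation, this happens exactly when $i+1$ appears to the left of $i$ in $\sigma$. Hence the claim reduces to the following statement: for every $i\in[m-1]$, we have $\col_T(i)\ge\col_T(i+1)$ if and only if $i$ appears to the left of $i+1$ in $\sfread(T)$.

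Next we recall how $\sfread(T)$ is built from \cref{def: column reading}. It concatenates the columns from right to left. Within a column, it reads the boxes from top to bottom, and within each box it reads the entries in increasing order. Thus, for entries in different columns, the one in the larger column comes first. For entries in the same column, the relative order is determined by rows and then by size, as below.

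The proof then splits into three cases.

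\begin{enumerate}[label = {\rm (\roman*)}]
\item If $\col_T(i)>\col_T(i+1)$, then $i$ is read before $i+1$, and $i\in\Des(T)$. This is consistent with the claim.
\item If $\col_T(i)<\col_T(i+1)$, then $i+1$ is read first, and $i\notin\Des(T)$. This is also consistent.
\item If $\col_T(i)=\col_T(i+1)$, then $i\in\Des(T)$, and we must show that $i$ is read before $i+1$.
\end{enumerate}

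Case (iii) is the only part that requires an argument, since it needs the tableau conditions. If $i$ and $i+1$ lie in the same box, they are read in increasing order, so $i$ comes first. If they lie in different boxes of the same column, the column strictness $\max T(r,j)<\min T(r+1,j)$ forces the box containing $i$ to be strictly above the box containing $i+1$. Indeed, if $i+1$ were in a higher box, then $i+1<i$, which is impossible. Since boxes are read from top to bottom, $i$ is again read first. Combining the three cases gives $\Des(T)=[m-1]\setminus\DesL(\sfread(T))$.
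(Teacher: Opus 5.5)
Your proof is correct and follows the same route as the paper. Both arguments translate $i\in\DesL(\sfread(T))$ into ``$i+1$ appears before $i$ in $\sfread(T)$'' and then compare the columns of $i$ and $i+1$; you additionally spell out, via column strictness, the same-column case that the paper compresses into the remark that $\sfread(T)$ lists the entries of each column in increasing order.
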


\begin{proof}
Let $i \in [m-1]$.
The word $\sfread(T)$ is obtained by reading the columns of $T$ from right to left and listing the entries in each column in increasing order.
This means that $i+1$ appears before $i$ in $\sfread(T)$ if and only if $\col_T(i)<\col_T(i+1)$.
Thus, $\Des(T) = [m-1] \setminus \DesL(\sfread(T))$.
\end{proof}

Let $\preceq_E$ be a relation on $E$ defined by 
\begin{align*}
T \preceq_E U
\quad \text{if and only if} \quad 
U = \pi_\sigma \cdot T \text{ for some } \sigma \in \SG_m.
\end{align*}
The following proposition shows that $(E,\preceq_E)$ is a poset isomorphic to a left weak Bruhat interval.

\begin{proposition}\label{prop: class is weak interval}
The following hold.
\begin{enumerate}[label = {\rm (\arabic*)}]
\item
If $T,U \in E$ and $T \preceq_E U$, then $\sfread(T) \preceq_L \sfread(U)$.

\item
The poset $(E,\preceq_E)$ is isomorphic to $([\sfread(T_E),\sfread(T'_E)]_L,\preceq_L)$.
\end{enumerate}
\end{proposition}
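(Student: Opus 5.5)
The basic step is the relation $\sfread(\uppi_i(T)) = s_i\,\sfread(T)$ whenever $\uppi_i(T)=s_i\cdot T\neq T$. Indeed, $s_i\cdot T$ is obtained by swapping the labels $i$ and $i+1$, so its column reading word is obtained from $\sfread(T)$ by exchanging the letters $i$ and $i+1$. This is exactly left multiplication by $s_i$.

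By \cref{lem: column reading properties}, $\col_T(i)>\col_T(i+1)$ means that $i$ appears before $i+1$ in $\sfread(T)$, so $i\notin\DesL(\sfread(T))$. Hence $\sfread(T)\preceq_L^c s_i\sfread(T)$ is a covering relation. Likewise, $\uppi_i(T)=T$ corresponds to $i\in\DesL(\sfread(T))$, and $\uppi_i(T)=0$ occurs exactly when $i$ and $i+1$ lie in the same column.

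For (1), write $\pi_\sigma$ as a product of generators and apply them one at a time. Each application either fixes the current tableau or moves it up by a covering relation in left weak order. A zero step cannot occur, since the final result is $U\neq 0$. Therefore $\sfread(T)\preceq_L\sfread(U)$. Note also that $\sfread$ is injective on $E$, since the slot positions are fixed within $E$. Combined with (1), this shows that $\preceq_E$ is antisymmetric; it is clearly reflexive and transitive. So $(E,\preceq_E)$ is a poset and $\sfread$ is an order-preserving injection.

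For (2), I first show that every $T\in E$ satisfies $T_E\preceq_E T\preceq_E T'_E$. For the upper bound, if $T$ is not the sink, \cref{thm: unique source sink} gives some $\uppi_i$ with $\uppi_i(T)=s_i\cdot T\neq T$. This strictly increases $\ell(\sfread(\cdot))$, so iterating the step must terminate, and it can only terminate at the unique sink $T'_E$. For the lower bound, if $T$ is not the source, some $T''\in E$ has $\uppi_i(T'')=T$ with $T''\neq T$. Descending in length in the same way must end at the unique source $T_E$. Consequently $\sfread(E)\subseteq[\sfread(T_E),\sfread(T'_E)]_L$.

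The main obstacle is surjectivity onto the interval, together with showing that the inverse map is order-preserving. Take $\gamma$ in the interval and a saturated chain $\sfread(T_E)=\gamma_0\preceq_L^c\gamma_1\preceq_L^c\cdots\preceq_L^c\gamma_k=\gamma$ with $\gamma_{t}=s_{i_t}\gamma_{t-1}$. I will induct along this chain. Suppose $\gamma_{t-1}=\sfread(T)$ with $T\in E$. Then $i_t$ appears before $i_t+1$ in $\sfread(T)$, so $\col_T(i_t)\ge\col_T(i_t+1)$, and I need to rule out equality.

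Suppose equality held, so $i_t$ and $i_t+1$ are in the same column and hence comparable slots of $P_E$. The induced order of $i,i+1$ in a word records the relative order of these two slots. Every element of the interval lies below $\sfread(T'_E)$, and left weak order is governed by containment of inversion sets (pairs of values). Fix a column; the letters in it, listed by slot order in $P_E$, always appear in increasing order in the column reading, whatever $T\in E$ is. I would argue that $\sfread(T_E)$ and $\sfread(T'_E)$ agree on the relative order of any pair of values occupying comparable slots. Every $\gamma$ in the interval then shares that agreement, since it satisfies inversion set containment in both directions. But $\gamma_t$ reverses the order of $i_t,i_t+1$, which are comparable slots in $T$, a contradiction. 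Pinning down this inversion-set bookkeeping carefully is where I expect most of the work.

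Granting this, $\uppi_{i_t}(T)=s_{i_t}\cdot T\in E$ by \cref{thm: component direct sum}, and its reading word is $\gamma_t$. The same chain argument shows that $\gamma\preceq_L\gamma'$ in the interval implies $\sfread^{-1}(\gamma)\preceq_E\sfread^{-1}(\gamma')$. This completes the poset isomorphism.
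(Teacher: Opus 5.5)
Your proof is correct. Part (1), the partial-order check, and the bounds $T_E\preceq_E T\preceq_E T'_E$ follow the paper; your length-descent argument is an explicit version of ``sources and sinks are the minimal and maximal elements of a finite poset.''

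The real difference is in surjectivity.
\begin{itemize}
\item The paper builds the preimage directly. It places $\gamma(k)$ in the slot at position $k$ of the column reading. It then uses $\operatorname{Inv}(\sfread(T_E))\subseteq\operatorname{Inv}(\gamma)\subseteq\operatorname{Inv}(\sfread(T'_E))$ to check that entries increase along every relation of $P_E$. Only afterwards does it walk a saturated chain to show the inverse map is order-preserving. There it excludes the same-column case using the already available preimage $T_j\in E$ and the fact that $L_{T_j}$ is a linear extension.
\item You prove a single lifting step instead: a cover $\sfread(T)\preceq_L^c s_i\sfread(T)$ inside the interval is realized by $\uppi_i(T)=s_i\cdot T$. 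Lifting chains from $\sfread(T_E)$ then gives surjectivity and order-reflection at once.
\end{itemize}
Your route is more uniform. The paper's route yields an explicit formula for the inverse map $\gamma\mapsto T_\gamma$.

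One statement needs fixing. Left weak order is characterized by containment of inversion sets of \emph{positions}, $\{(a,b)\mid a<b,\ \gamma(a)>\gamma(b)\}$, not of pairs of values. For example, $132\preceq_L 231$, but their value-inversion sets $\{(2,3)\}$ and $\{(1,2),(1,3)\}$ are not nested. Taken literally, your sentence ``every $\gamma$ in the interval satisfies inversion set containment'' is false for value pairs.

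Positions are exactly what your argument needs, because position $k$ of $\sfread(T)$ corresponds to the same slot for every $T\in E$. With that correction, the bookkeeping you were worried about is immediate and uses only the upper bound. Suppose $i_t$ and $i_t+1$ lie in the same column of $T$, at positions $a<b$ of $\gamma_{t-1}=\sfread(T)$.
\begin{itemize}
\item Every tableau in $E$ is read in increasing order down a column, so $(a,b)\notin\operatorname{Inv}(\sfread(T'_E))$.
\item But $\gamma_t=s_{i_t}\gamma_{t-1}$ has $\gamma_t(a)=i_t+1>i_t=\gamma_t(b)$, so $(a,b)\in\operatorname{Inv}(\gamma_t)$.
\end{itemize}
This contradicts $\gamma_t\preceq_L\sfread(T'_E)$.
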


\begin{proof}
\noindent{\rm (1)}
Let $T,U \in E$ with $T \preceq_E U$.
By the definition of $\preceq_E$, there exists $\sigma \in \SG_m$ such that $U = \pi_\sigma \cdot T$.
Write a reduced expression for $\sigma$ as $s_{i_k}\cdots s_{i_2}s_{i_1}$, and define 
\[
T_0:=T 
\quad \text{and} \quad
T_j:=\pi_{i_j} \cdot T_{j-1}
\quad\text{for } 1 \le j \le k.
\]
Then $T_k = U$.
Since $U \ne 0$, $T_j \ne 0$ for all $1 \le j \le k$, so each $T_j$ belongs to $E$.
Fix $1 \le j \le k$.
If $T_j = T_{j-1}$, then $\sfread(T_j) = \sfread(T_{j-1})$.
Suppose that $T_j \ne T_{j-1}$.
Then 
\[
T_j = s_{i_j} \cdot T_{j-1}
\quad \text{and} \quad
\col_{T_{j-1}}(i_j) > \col_{T_{j-1}}(i_j+1).
\]
We have $i_j \in \Des(T_{j-1})$, so \cref{lem: column reading properties} implies that $i_j \notin \DesL(\sfread(T_{j-1}))$.
We also have $\sfread(T_j)=s_{i_j}\sfread(T_{j-1})$.
Combining these two observations yields $\sfread(T_{j-1}) \preceq_L^c \sfread(T_j)$.
Therefore,
\[
\sfread(T) = \sfread(T_0)
\preceq_L \sfread(T_1)
\preceq_L \cdots
\preceq_L \sfread(T_k) = \sfread(U).
\]

\noindent{\rm (2)}
For any $T \in E$, we have $T = \pi_{\mathrm{id}} \cdot T$, so $\preceq_E$ is reflexive.
For transitivity, suppose that $T,T',T'' \in E$ and $T \preceq_E T'$ and $T' \preceq_E T''$.
Then there exist $\sigma,\gamma \in \SG_m$ such that $T' = \pi_\sigma \cdot T$ and $T'' = \pi_\gamma \cdot T'$.
Since $\pi_\gamma\pi_\sigma = \pi_\rho$ for some $\rho \in \SG_m$, we have $T'' = \pi_\rho \cdot T$, which proves that $\preceq_E$ is transitive.
For antisymmetry, suppose that $U, U' \in E$ and $U \preceq_E U'$ and $U' \preceq_E U$.
By (1), we have $\sfread(U) \preceq_L \sfread(U')$ and $\sfread(U') \preceq_L \sfread(U)$.
The antisymmetry of $\preceq_L$ gives $\sfread(U) = \sfread(U')$.
Since the map $E \to \SG_m$ sending $T$ to $\sfread(T)$ is injective, we have $U = U'$.
Therefore, $\preceq_E$ is a partial order on $E$.

Now, let us prove that the map
\[
\phi_E:E\to[\sfread(T_E),\sfread(T'_E)]_L,
\quad
T \mapsto \sfread(T)
\]
is a poset isomorphism.
If $|E|=1$, the assertion is immediate.
Suppose that $|E|>1$.

By the definition of $\preceq_E$, the source and sink tableaux are the minimal and maximal elements of $(E,\preceq_E)$, respectively.
Since $E$ is finite, \cref{thm: unique source sink} implies that
\[
T_E\preceq_E T\preceq_E T'_E
\quad \text{for all } T \in E.
\]
Applying (1) to these inequalities, we have
\[
\sfread(T_E) \preceq_L \sfread(T) \preceq_L \sfread(T'_E) 
\quad \text{for all } T \in E.
\]
In particular,
\[
\{\sfread(T)\mid T \in E\}
\subseteq
[\sfread(T_E),\sfread(T'_E)]_L.
\]
This proves that $\phi_E$ is well defined.

Clearly, the map $\phi_E$ is injective, and (1) shows that it is order-preserving.
It remains to show that $\phi_E$ is surjective and that for $T,U \in E$, if $\sfread(T)\preceq_L\sfread(U)$, then $T\preceq_E U$.

We first prove that $\phi_E$ is surjective.
Let $\gamma \in [\sfread(T_E),\sfread(T'_E)]_L$ and set $\sigma:=\sfread(T_E)$.
Recall that for $T \in E$ and $1 \le i \le m$, $\sfx_i(T)$ denotes the slot corresponding to the entry $i$ in $T$.
Define a filling $T_\gamma$ by assigning the integer $\gamma(k)$ to the slot $\sfx_{\sigma(k)}(T_E)$ for each $1 \le k \le m$.
To show that $T_\gamma \in E$, we recall inversion sets.
For $\tau \in \SG_m$, let
\[
\operatorname{Inv}(\tau):=\{(a,b)\mid 1 \le a < b \le m,\ \tau(a) > \tau(b)\}.
\]
It is well known that $\tau\preceq_L\rho$ if and only if $\operatorname{Inv}(\tau)\subseteq\operatorname{Inv}(\rho)$; see, for example, \cite[Proposition~3.1.3]{BB05}.
Let $1 \le i, j \le m$ be such that $\sfx_i(T_E)\prec_{P_E}\sfx_j(T_E)$, and set $a:=\sigma^{-1}(i)$ and $b:=\sigma^{-1}(j)$.
For any $T \in E$, the entries assigned to $\sfx_i(T_E)$ and $\sfx_j(T_E)$ occur in $\sfread(T)$ at positions $a$ and $b$, respectively, so the relation $\sfx_i(T_E)\prec_{P_E}\sfx_j(T_E)$ implies that
\[
\sfread(T)(a)<\sfread(T)(b).
\]
It follows that $(a,b)\notin\operatorname{Inv}(\sfread(T'_E))$ when $a<b$, whereas $(b,a)\in\operatorname{Inv}(\sfread(T_E))$ when $b<a$.
Combining this with the inclusions
\[
\operatorname{Inv}(\sfread(T_E))
\subseteq \operatorname{Inv}(\gamma)
\subseteq \operatorname{Inv}(\sfread(T'_E))
\]
yields $\gamma(a)<\gamma(b)$.
In other words, the entry of $T_\gamma$ in $\sfx_i(T_E)$ is smaller than the entry of $T_\gamma$ in $\sfx_j(T_E)$.
Since $i$ and $j$ were arbitrary with $\sfx_i(T_E)\prec_{P_E}\sfx_j(T_E)$, the entries of $T_\gamma$ increase along every strict relation in $P_E$, and hence $T_\gamma \in E$.
By construction, $\sfread(T_\gamma)=\gamma$, which proves that $\phi_E$ is surjective.

We next prove that for $T,U \in E$, if $\sfread(T)\preceq_L\sfread(U)$, then $T\preceq_E U$.
Let $T,U \in E$ be such that $\sfread(T)\preceq_L\sfread(U)$.
If $T=U$, then $T\preceq_E U$ by the reflexivity of $\preceq_E$.
Suppose that $T\ne U$.
Choose a saturated chain
\[
\sfread(T)=\gamma_0\preceq_L^c\gamma_1\preceq_L^c\cdots\preceq_L^c\gamma_k=\sfread(U).
\]
For each $0 \le j \le k$, we have
\[
\sfread(T_E)\preceq_L\sfread(T)\preceq_L\gamma_j
\preceq_L\sfread(U)\preceq_L\sfread(T'_E).
\]
It follows that $\gamma_j$ belongs to $[\sfread(T_E),\sfread(T'_E)]_L$.
Since $\phi_E$ is bijective, for each $0 \le j \le k$, there exists a unique tableau $T_j \in E$ such that $\sfread(T_j)=\gamma_j$.
In particular, $T_0=T$ and $T_k=U$.
Fix $j \in [k]$, and let $i_j \in [m-1]$ be the index such that $\gamma_j = s_{i_j}\gamma_{j-1}$.
Since $i_j \notin \DesL(\gamma_{j-1})$, it follows from \cref{lem: column reading properties} that
\[
\col_{T_{j-1}}(i_j)\ge\col_{T_{j-1}}(i_j+1).
\]
Suppose that $\col_{T_{j-1}}(i_j) = \col_{T_{j-1}}(i_j+1)$.
Then $\sfx_{i_j}(T_{j-1})$ and $\sfx_{i_j+1}(T_{j-1})$ are comparable in $P_E$.
By the definition of $L_{T_{j-1}}$, we have
$\sfx_{i_j}(T_{j-1}) \prec_{L_{T_{j-1}}} \sfx_{i_j+1}(T_{j-1})$.
Since $L_{T_{j-1}}$ is a linear extension of $P_E$, these observations imply that
\[
\sfx_{i_j}(T_{j-1})\prec_{P_E}\sfx_{i_j+1}(T_{j-1}).
\] 
On the other hand, $\sfx_{i_j}(T_{j-1})$ and $\sfx_{i_j+1}(T_{j-1})$ occur in opposite orders in $L_{T_{j-1}}$ and $L_{T_j}$.
This contradicts the fact that $L_{T_j}$ is a linear extension of $P_E$.
It follows that $\col_{T_{j-1}}(i_j)>\col_{T_{j-1}}(i_j+1)$, and hence
\[
\pi_{i_j}\cdot T_{j-1}=s_{i_j}\cdot T_{j-1}.
\]
Moreover,
\[
\sfread(\pi_{i_j} \cdot T_{j-1})
= \sfread(s_{i_j}\cdot T_{j-1})
= s_{i_j}\gamma_{j-1}
= \gamma_j
= \sfread(T_j).
\]
Since both $\pi_{i_j}\cdot T_{j-1}$ and $T_j$ belong to $E$, the injectivity of $\phi_E$ implies that $\pi_{i_j}\cdot T_{j-1}=T_j$.
Iterating the equalities $\pi_{i_j}\cdot T_{j-1}=T_j$ for $j=1,2,\ldots,k$, we have
\[
U=\pi_{i_k}\pi_{i_{k-1}}\cdots\pi_{i_1}\cdot T
=\pi_\xi\cdot T \quad \text{for some } \xi \in \SG_m.
\]
This implies that $T\preceq_E U$.

Therefore, $\phi_E$ is a poset isomorphism.
\end{proof}

We now state the main result of this section.

\begin{theorem}\label{thm: component interval module}
For every $E \in \calE_{\lambda,m}$, we have an $H_m(0)$-module isomorphism
\[
\scrG_E\cong\sfB(\sfread(T_E),\sfread(T'_E)).
\]
\end{theorem}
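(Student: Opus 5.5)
We take the linear extension $\Phi_E:\scrG_E\to\sfB(\sfread(T_E),\sfread(T'_E))$ of the bijection $\phi_E\colon T\mapsto\sfread(T)$ from \cref{prop: class is weak interval}(2). Since $\phi_E$ maps the basis $E$ bijectively onto the basis $[\sfread(T_E),\sfread(T'_E)]_L$ of the weak Bruhat interval module, $\Phi_E$ is a linear isomorphism. It remains to check that $\Phi_E(\pi_i\cdot T)=\pi_i\cdot\sfread(T)$ for every $T\in E$ and $1\le i\le m-1$, where the right-hand side is the action of \cref{def: weak bruhat interval module}. Write $\gamma:=\sfread(T)$ and split into the three cases of \cref{eq: svt action}.

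\emph{Case $\col_T(i)<\col_T(i+1)$.} Here $\pi_i\cdot T=T$, and $i\notin\Des(T)$. By \cref{lem: column reading properties}, $i\in\DesL(\gamma)$, so $\pi_i\cdot\gamma=\gamma=\Phi_E(T)$.

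\emph{Case $\col_T(i)>\col_T(i+1)$.} Here $\pi_i\cdot T=s_i\cdot T\in E$ by \cref{lem: admissible swap} and \cref{thm: component direct sum}. Since $i\in\Des(T)$, \cref{lem: column reading properties} gives $i\notin\DesL(\gamma)$. Interchanging the entries $i$ and $i+1$ in $T$ interchanges the values $i,i+1$ in the column reading word, so $\sfread(s_i\cdot T)=s_i\gamma$. As $s_i\cdot T\in E$, the permutation $s_i\gamma=\phi_E(s_i\cdot T)$ lies in the interval. Hence $\pi_i\cdot\gamma=s_i\gamma=\Phi_E(\pi_i\cdot T)$.

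\emph{Case $\col_T(i)=\col_T(i+1)$.} This is the only delicate case. Here $\pi_i\cdot T=0$, and \cref{lem: column reading properties} gives $i\notin\DesL(\gamma)$, so we must show $s_i\gamma\notin[\sfread(T_E),\sfread(T'_E)]_L$. Suppose instead that $s_i\gamma$ lies in the interval. By surjectivity of $\phi_E$ there is $U\in E$ with $\sfread(U)=s_i\gamma$, and injectivity of $\sfread$ on fillings of the fixed slots forces $U=s_i\cdot T$, that is, $i$ and $i+1$ are swapped. But $i$ and $i+1$ occupy two slots of the same column, and these slots are comparable in $P_E$. So $\sfx_i(T)\prec_{P_E}\sfx_{i+1}(T)$, and the swap reverses this order, contradicting that $L_U$ is a linear extension. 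The same argument already appears in the last part of the proof of \cref{prop: class is weak interval}, and we reuse it verbatim. Hence $\pi_i\cdot\gamma=0=\Phi_E(\pi_i\cdot T)$.

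These three cases show that $\Phi_E$ intertwines each generator $\pi_i$. Therefore $\Phi_E$ is an $H_m(0)$-module isomorphism, which gives $\scrG_E\cong\sfB(\sfread(T_E),\sfread(T'_E))$.
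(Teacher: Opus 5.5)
Your proof is correct and follows the paper's argument essentially step for step. You use the same linear extension of $\phi_E$ and the same three cases via \cref{lem: column reading properties}. In the zero case you reach the same contradiction: the comparable same-column slots satisfy $\sfx_i(T)\prec_{P_E}\sfx_{i+1}(T)$, and this order would be reversed in $L_U$.

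One phrase needs tightening. ``$U=s_i\cdot T$'' must be read as an equality of slot fillings, because when $i$ and $i+1$ share a box, $s_i\cdot T$ as a set-valued filling is just $T$. What you actually use, as does the paper, is that $U$ puts $i+1$ in the slot $\sfx_i(T)$ and $i$ in the slot $\sfx_{i+1}(T)$.
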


\begin{proof}
By \cref{prop: class is weak interval}, the linear map
\[
\Phi_E:\scrG_E\rightarrow\sfB(\sfread(T_E),\sfread(T'_E)),
\quad
T\mapsto \sfread(T)
\]
is a vector space isomorphism.
It remains to show that $\Phi_E$ commutes with the $H_m(0)$-action.

Let $T \in E$ and $1 \le i \le m-1$.
We consider three cases according to the value of $\pi_i \cdot T$.
\medskip

\noindent
{\bf Case 1: $\pi_i \cdot T=T$.}
By \cref{eq: svt action}, we have $\col_T(i)<\col_T(i+1)$.
It follows from \cref{lem: column reading properties} that $i \in \DesL(\sfread(T))$.
Thus,
\[
\Phi_E(\pi_i \cdot T)=\sfread(T)=\pi_i \cdot \sfread(T).
\]

\noindent
{\bf Case 2: $\pi_i \cdot T=0$.}
By \cref{eq: svt action}, we have $\col_T(i)=\col_T(i+1)$.
It follows from \cref{lem: column reading properties} that $i \notin \DesL(\sfread(T))$.
The slots $\sfx_i(T)$ and $\sfx_{i+1}(T)$ are comparable in $P_E$.
By the definition of $L_T$, we have $\sfx_i(T)\prec_{L_T}\sfx_{i+1}(T)$.
Since $L_T$ is a linear extension of $P_E$, these observations imply that $\sfx_i(T)\prec_{P_E}\sfx_{i+1}(T)$.
Suppose that $s_i \, \sfread(T) \in [\sfread(T_E),\sfread(T'_E)]_L$.
By \cref{prop: class is weak interval}, there exists $U \in E$ such that $\sfread(U)=s_i \, \sfread(T)$.
Then we have $\sfx_{i+1}(T)\prec_{L_U}\sfx_i(T)$.
Since $L_U$ is a linear extension of $P_E$, this is a contradiction.
It follows that $s_i \, \sfread(T)\notin[\sfread(T_E),\sfread(T'_E)]_L$.
Thus,
\[
\Phi_E(\pi_i \cdot T)=0=\pi_i \cdot \sfread(T).
\]

\noindent
{\bf Case 3: $\pi_i \cdot T = s_i \cdot T$.}
By \cref{eq: svt action}, we have $\col_T(i)>\col_T(i+1)$.
It follows from \cref{lem: column reading properties} that $i \notin \DesL(\sfread(T))$.
Since $s_i \cdot T \in E$, \cref{prop: class is weak interval} implies $
\sfread(s_i \cdot T)\in[\sfread(T_E),\sfread(T'_E)]_L$.
Thus,
\[
\Phi_E(\pi_i \cdot T)
=\sfread(s_i \cdot T)
=s_i \, \sfread(T)
=\pi_i \cdot \sfread(T).
\]

Therefore, $\Phi_E$ is an $H_m(0)$-module isomorphism.
\end{proof}

\begin{example}\label{ex: column reading permutation}
Let $E \in \calE_{(3,2),8}$ be the equivalence class such that
\[
c_1(T)=(2,1),\qquad c_2(T)=(2,2),\qquad\text{and}\qquad c_3(T)=(1)
\]
for $T \in E$.
Applying \cref{alg: extremal tableaux}, we have
\[
\ytableausetup{boxsize=1.7em}
T_E=
\begin{ytableau}
 1,2 & 3,4 & 5\\
 6 & 7,8
\end{ytableau}
\quad\text{and}\quad
T'_E=
\begin{ytableau}
1,2 & 4,5 & 8\\
3 & 6,7
\end{ytableau}\;.
\]
Their readings are $\sfread(T_E)=53478126$ and $\sfread(T'_E)=84567123$.
By \cref{thm: component interval module},
\[
\scrG_E\cong\sfB(53478126,84567123).
\]
We illustrate the $H_8(0)$-action on this interval in \cref{fig: interval action 328}, where the zero actions are omitted.
\end{example}

\begin{figure}[h]
\centering
\ytableausetup{boxsize=1.6em}
\begin{tikzpicture}[>=stealth,scale=1,transform shape,
  pair/.style={minimum width=3.1cm,minimum height=1.45cm,inner sep=0pt},
  permutation/.style={inner sep=1pt,font=\scriptsize},
  edge/.style={->,shorten >=4pt,shorten <=4pt},
  edgelabel/.style={fill=white,inner sep=1pt,font=\scriptsize},
  looplabel/.style={anchor=west,inner sep=0pt,font=\scriptsize}]

\node[pair] (V1) at (0,0) {};
\node[permutation] (W1) at ([yshift=0.28cm]V1.center) {$53478126$};

\node[pair] (V2) at (0,-1.5) {};
\node[permutation] (W2) at ([yshift=0.28cm]V2.center) {$63478125$};

\node[pair] (V3) at (-2,-3) {};
\node[permutation] (W3) at ([yshift=0.28cm]V3.center) {$63578124$};

\node[pair] (V4) at (2,-3) {};
\node[permutation] (W4) at ([yshift=0.28cm]V4.center) {$73468125$};

\node[pair] (V5) at (-4,-4.5) {};
\node[permutation] (W5) at ([yshift=0.28cm]V5.center) {$64578123$};

\node[pair] (V6) at (0,-4.5) {};
\node[permutation] (W6) at ([yshift=0.28cm]V6.center) {$73568124$};

\node[pair] (V7) at (4,-4.5) {};
\node[permutation] (W7) at ([yshift=0.28cm]V7.center) {$83467125$};

\node[pair] (V8) at (-2,-6) {};
\node[permutation] (W8) at ([yshift=0.28cm]V8.center) {$74568123$};

\node[pair] (V9) at (2,-6) {};
\node[permutation] (W9) at ([yshift=0.28cm]V9.center) {$83567124$};

\node[pair] (V10) at (0,-7.5) {};
\node[permutation] (W10) at ([yshift=0.28cm]V10.center) {$84567123$};

\draw[edge] (W1) -- node[edgelabel,right, xshift=0.07cm] {$\pi_5$} (W2);
\draw[edge] (W2) -- node[edgelabel,above left] {$\pi_4$} (W3);
\draw[edge] (W2) -- node[edgelabel,above right] {$\pi_6$} (W4);
\draw[edge] (W3) -- node[edgelabel,above left] {$\pi_3$} (W5);
\draw[edge] (W3) -- node[edgelabel,above right] {$\pi_6$} (W6);
\draw[edge] (W4) -- node[edgelabel,above left] {$\pi_4$} (W6);
\draw[edge] (W4) -- node[edgelabel,above right] {$\pi_7$} (W7);
\draw[edge] (W5) -- node[edgelabel,above right] {$\pi_6$} (W8);
\draw[edge] (W6) -- node[edgelabel,above left] {$\pi_3$} (W8);
\draw[edge] (W6) -- node[edgelabel,above right] {$\pi_7$} (W9);
\draw[edge] (W7) -- node[edgelabel,above left] {$\pi_4$} (W9);
\draw[edge] (W8) -- node[edgelabel,above right] {$\pi_7$} (W10);
\draw[edge] (W9) -- node[edgelabel,above left] {$\pi_3$} (W10);

\node (L1) at ([xshift=-0.75cm,yshift=0.28cm]V1.east) {};
\path (L1) edge [->,out=40,in=320,loop,min distance=8mm] (L1);
\node[looplabel] at ([xshift=-0.05cm,yshift=0.28cm]V1.east) {$\pi_2,\pi_4,\pi_6$};
\node (L2) at ([xshift=-0.75cm,yshift=0.28cm]V2.east) {};
\path (L2) edge [->,out=40,in=320,loop,min distance=8mm] (L2);
\node[looplabel] at ([xshift=-0.05cm,yshift=0.28cm]V2.east) {$\pi_2,\pi_5$};
\node (L3) at ([xshift=-0.75cm,yshift=0.28cm]V3.east) {};
\path (L3) edge [->,out=40,in=320,loop,min distance=8mm] (L3);
\node[looplabel] at ([xshift=-0.05cm,yshift=0.28cm]V3.east) {$\pi_2,\pi_4,\pi_5$};
\node (L4) at ([xshift=-0.75cm,yshift=0.28cm]V4.east) {};
\path (L4) edge [->,out=40,in=320,loop,min distance=8mm] (L4);
\node[looplabel] at ([xshift=-0.05cm,yshift=0.28cm]V4.east) {$\pi_2,\pi_5,\pi_6$};
\node (L5) at ([xshift=-0.75cm,yshift=0.28cm]V5.east) {};
\path (L5) edge [->,out=40,in=320,loop,min distance=8mm] (L5);
\node[looplabel] at ([xshift=-0.05cm,yshift=0.28cm]V5.east) {$\pi_3,\pi_5$};
\node (L6) at ([xshift=-0.75cm,yshift=0.28cm]V6.east) {};
\path (L6) edge [->,out=40,in=320,loop,min distance=8mm] (L6);
\node[looplabel] at ([xshift=-0.05cm,yshift=0.28cm]V6.east) {$\pi_2,\pi_4,\pi_6$};
\node (L7) at ([xshift=-0.75cm,yshift=0.28cm]V7.east) {};
\path (L7) edge [->,out=40,in=320,loop,min distance=8mm] (L7);
\node[looplabel] at ([xshift=-0.05cm,yshift=0.28cm]V7.east) {$\pi_2,\pi_5,\pi_7$};
\node (L8) at ([xshift=-0.75cm,yshift=0.28cm]V8.east) {};
\path (L8) edge [->,out=40,in=320,loop,min distance=8mm] (L8);
\node[looplabel] at ([xshift=-0.05cm,yshift=0.28cm]V8.east) {$\pi_3,\pi_6$};
\node (L9) at ([xshift=-0.75cm,yshift=0.28cm]V9.east) {};
\path (L9) edge [->,out=40,in=320,loop,min distance=8mm] (L9);
\node[looplabel] at ([xshift=-0.05cm,yshift=0.28cm]V9.east) {$\pi_2,\pi_4,\pi_7$};
\node (L10) at ([xshift=-0.75cm,yshift=0.28cm]V10.east) {};
\path (L10) edge [->,out=40,in=320,loop,min distance=8mm] (L10);
\node[looplabel] at ([xshift=-0.05cm,yshift=0.28cm]V10.east) {$\pi_3,\pi_7$};
\end{tikzpicture}
\caption{The $H_8(0)$-action on $[\sfread(T_E),\sfread(T'_E)]_L$.}
\label{fig: interval action 328}
\end{figure}

\section*{Declaration of AI Use}

The author used OpenAI Codex with the GPT-5.6 Sol model in conducting this research.
Codex assisted with literature searches, the development of proofs, the construction of examples, and the revision of the manuscript.
The author reviewed and verified all mathematical content and takes full responsibility for the paper.

\bibliographystyle{abbrv}
\bibliography{references}

\end{document}